%


\documentclass[bj]{imsart}

\usepackage{amssymb,amsmath,amsthm,mathdesign}
\RequirePackage[colorlinks,citecolor=blue,urlcolor=blue]{hyperref}

\arxiv{arXiv:1907.12096}


\newtheorem{theorem}{Theorem}[section]
\newtheorem{lemma}[theorem]{Lemma}
\newtheorem{proposition}[theorem]{Proposition}

\newtheorem{assumption}[theorem]{Assumption}
\newtheorem{remark}[theorem]{Remark}

\newtheorem{definition}[theorem]{Definition}

\newcommand{\be}{\begin{equation}}
\newcommand{\ee}{\end{equation}}

\newcommand{\bes}{\begin{equation*}}
\newcommand{\ees}{\end{equation*}}

\def\E{\bE}

\def\P{\bP} 


\def\cL{\mathcal{L}}

\def\bE{\mathbb{E}}

\newcommand{\R}{\mathbf{R}}

\renewcommand{\d}{{\rm d}}

\renewcommand{\geq}{\geqslant}
\renewcommand{\leq}{\leqslant}

\renewcommand{\P}{\mathrm{P}}

\def\m1{\mathbf{1}}



\begin{document}

\begin{frontmatter}

\title{The Osgood condition for stochastic partial differential equations.}

\runtitle{Osgood Condition}

\begin{aug}

\author{\fnms{Mohammud} \snm{Foondun}\thanksref{a,e1}\ead[label=e1,mark]{mohammud.foondun@strath.ac.uk}}
\author{\fnms{Eulalia} \snm{Nualart}\thanksref{b,e2}\ead[label=e2, mark]{eulalia.nualart@upf.edu}}

\address[a]{University of Strathclyde, Department of Mathematics and Statistics, 26 Richmond Street, Glasgow G1 1XH, Scotland.
\printead{e1}}

\address[b]{Universitat Pompeu Fabra and Barcelona Graduate School of Economics, Department of Economics and Business, Ram\'on Trias Fargas 25-27, 08005
Barcelona, Spain.
\printead{e2}}

\runauthor{Mohammud Foondun et al.}

\end{aug}

\begin{abstract}
We study the following equation 
\begin{equation*}
\frac{\partial u(t,\,x)}{\partial t}= \Delta u(t,\,x)+b(u(t,\,x))+\sigma \dot{W}(t,\,x),\quad t>0,
\end{equation*}
where $\sigma$ is a positive constant and $\dot{W}$ is a space-time white noise. The initial condition $u(0,x)=u_0(x)$ is assumed to be a nonnegative and continuous function. We first study the problem on $[0,\,1]$ with homogeneous Dirichlet boundary conditions.  Under some suitable conditions, together with a theorem of Bonder and Groisman in \cite{BonderG}, our first result shows that the solution blows up in finite time if and only if for some $a>0$,
\begin{equation*}
\int_{a}^\infty\frac{1}{b(s)}\,\d s<\infty,
\end{equation*}
which is the well-known Osgood condition.
We also consider the same equation on the whole line and show that the above condition is sufficient for the nonexistence of global solutions. Various other extensions are provided; we look at equations with fractional Laplacian and spatial colored noise in $\R^d$.
\\
\\
{\it AMS 2000 subject classications:} Primary 60H15, Secondary 60H10.
\end{abstract}

\begin{keyword}
\kwd{Fractional stochastic heat equation}
\kwd{ space-time white noise}
\kwd{spatial colored noise}
\end{keyword}
 
 \received{\smonth{1} \syear{0000}}


\end{frontmatter}
\section{Introduction and main results}

Consider the following non-linear heat equation,
\begin{equation*}
\left|\begin{aligned}
 \frac{\partial u(t, \,x)}{\partial t}&=
 \Delta u(t, x)+u(t,\,x)^{1+\eta},\quad x\in \R^d, \; t>0,\\
 u(0, x)&=u_0(x),
\end{aligned}\right.
\end{equation*}
where $u_0(x)$ is a nonnegative, continuous, and  bounded function.  It is well known that when $0<\eta\leq2/d$, there is no nontrivial global solution no matter how  small the nontrivial  initial condition $u_0$ is, while for $\eta>2/d$, one can construct nontrivial global solutions when $u_0$ is small enough; see \cite{Fujita, Haya, KST} for more precise statements and proofs. The exponent $\eta_c=2/d$ is often called the Fujita exponent after the author of the very influential paper \cite{Fujita}. When the equation is considered on the interval $[0,\,1]$ with homogeneous Dirichlet boundary conditions (ie. $u(t,0)=u(t,1)=0$), a different picture emerges. In this case, for any $\eta>0$, one can always construct nontrivial global solutions by taking $u_0$ small enough. And when $u_0$ is large enough, there is no global solution for any $\eta>0$; see  Theorem 17.3 of \cite{Souplet} for a precise statement.

One can ask whether the above phenomena still occur when one perturbs the equation with a noise term. For example, consider the following stochastic heat equation
\begin{equation}\label{usual}
\left|\begin{aligned}
\frac{\partial u(t, x)}{\partial t}&=
 \Delta u(t, x)+u(t,\,x)^{1+\eta}+\dot{W}(t,x)\\
 u(0, x)&=u_0(x),
 \end{aligned}\right.
 \end{equation}
 where $\dot{W}$ is a space-time white noise and the initial condition $u_0(x)$ is as above.
 More precisely, one can ask the following two questions.
\begin{itemize}
\item Does there exists a Fujita exponent? Or equivalently, for which values of $\eta$ one can find a nonnegative initial function so that there exist global solutions?
\item For the same equation on $[0,\,1]$ with homogeneous Dirichlet boundary conditions, can one take $u_0$ small enough so that there exist global solutions no matter what $\eta>0$ is?
\end{itemize}

For stochastic differential equations, the answer to the analogous question is given by Feller's test for explosions; see \cite[Chapter 5]{KS}. It is quite surprising that much less is known for stochastic partial differential equations. To the best of our knowledge, there are only two papers which look at these types of questions; \cite{BonderG} and \cite{DKT}. In the first paper the authors consider the equation on $[0,\,1]$  and give a negative answer to the second question above. In fact they consider the following more general equation
\begin{equation}\label{dirichlet}
\left|\begin{aligned}
\frac{\partial u(t,\,x)}{\partial t}&= \Delta u(t,\,x)+b(u(t,\,x))+\sigma \dot{W}(t,\,x),\quad x\in [0,\,1],\; t>0,\\
u(0,\,x)&=u_0(x),
\end{aligned}\right.
\end{equation}
with homogeneous Dirichlet boundary conditions. Here $\sigma>0$, $b:\R \rightarrow \R$ is a locally Lipschitz function, and the initial condition $u_0(x)$ is taken to be nonnegative and continuous; we will assume this throughout the whole paper.  The stochastic forcing term $\dot{W}$ is a space-time white noise. The main result of \cite{BonderG} says that the solution to (\ref{dirichlet}) blows up in finite time whenever $b$ is nonegative, convex, and satisfies the following well-known {\it Osgood condition}: for some $a>0$ 
\begin{equation}\label{osgood}
\int_{a}^{\infty} \frac{1}{b(s)}\,\d s<\infty,
\end{equation}
where $1/0=\infty$.
In \cite{DKT}, the authors investigate whether the Osgood condition is optimal. In particular, their Theorem 1.4 shows that if $\vert b(x)\vert=O(\vert x\vert \log \vert x\vert)$ as $\vert x \vert \rightarrow \infty$, then there exists a global solution to equation (\ref{dirichlet}).

As far as we know, the first question above has not been addressed till now. We briefly summarise the main findings of this current paper. For equation  \eqref{dirichlet}, we will show that Osgood condition (\ref{osgood}) is also necessary. Together with the result in \cite{BonderG}, this result shows the optimality of the Osgood condition.  We will then consider equation \eqref{usual} and answer the first question. We will show that $\eta_c=\infty$ meaning that there is no global solution no matter how small the initial condition is. This shows that the Fujita phenomenon does not occur in this stochastic setting.  In fact, we will show that the Osgood condition (\ref{osgood}) is sufficient for the nonexistence of global solutions for equation \eqref{usual}.

Before giving the main results of the paper, we provide some precision on various assumptions and technicalities.  We will need the following condition.
\begin{assumption}\label{A}
The function $b: \R\rightarrow \R^+$ is nonegative, locally Lipschitz and nondecreasing on $(0,\,\infty)$ and the initial condition $u_0(x)$ is nonnegative and continuous.
\end{assumption}
As in \cite{DKT}, we look at  random field solutions.
\begin{definition}\label{random-field}
A  local random field solution to \eqref{dirichlet} is a jointly measurable and adapted space-time process $u=\{u(t,x)\}_{(t,x) \in \R^+ \times[0,1]}$ satisfying the following integral equation
\begin{equation}\label{mild-dirichlet}
\begin{split}
u(t,\,x)=\int_0^1p(t,\,x,\,y)u_0(y)\,\d y&+\int_0^t\int_0^1p(t-s,\,x,\,y)b(u(s,\,y))\,\d y\,\d s\\\nonumber
& +\sigma\int_0^t\int_0^1p(t-s,\,x,\,y)W(\d y\,\d s),
\end{split}
\end{equation}
for all $t\in(0,\,\tau)$, where $\tau$ is some stopping time. If we can take $\tau=\infty$, then the local solution is also a global one. The function $p(t,x,y)$ is the heat kernel associated with the operator $\Delta$ with Dirichlet boundary conditions.  
\end{definition} 

As $u_0$ is continuous and $b$ is locally Lipschitz, the existence and uniqueness of  a local solution to equation (\ref{dirichlet}) is not an issue. Indeed, for each $N\geq 1$, one can define the truncation function 
\begin{equation} \label{bn}
b_N(x):={\bf 1}_{\{\vert x \vert \leq N\}} b(x)+ {\bf 1}_{\{\vert x \vert > N\}} b(N)+
{\bf 1}_{\{\vert x \vert < -N\}} b(-N)
\end{equation}
and obtain a unique global solution $\{u_N(t,x)\}_{(t,x) \in \R^+\times [0,1]}$ to equation (\ref{dirichlet}) where $b$ is replaced by $b_N$. Moreover, $u_N(t,x)$ is almost surely continuous in $(t,x)$. We consider the stopping time
\begin{equation*}
\tau_N:=\inf \left\{ t>0: \sup_{x\in [0,\,1]}|u_N(t,\,x)|>N\right\},
\end{equation*} 
where $\inf \emptyset:=\infty$. Then by the local property of the stochastic integral, one can easily show (see \cite[Section 4]{DKT}) that for each $N\geq \Vert u_0 \Vert_{\infty}$, we have a unique local random field solution $u(t,x)=u_N(t,x)$ for all $x \in [0,1]$ and $t\in [0,\,\tau_N)$. In particular, $u(t,x)$ is almost surely continuous in $(t,x)$. Moreover,  $\tau_N\leq \tau_{N+1}$.  Denote  $\tau_\infty=\lim_{N\rightarrow \infty}\tau_N$. If $\P(\tau_{\infty}<\infty)>0$, then we say that the solution blows up in finite time with positive probability and if  $\P(\tau_{\infty}<\infty)=1$, we say that the solution blows up in finite time almost surely.   Alternatively, since the noise is additive, one could also use a local inversion theorem to obtain local existence of solutions.  We are now ready to state the first result of this paper.
\begin{theorem}\label{theo-dirichlet}
Suppose that Assumption \ref{A} holds. If the solution to \eqref{dirichlet} blows up in finite time with positive probability then $b$ satisfies the Osgood condition \eqref{osgood}.
\end{theorem}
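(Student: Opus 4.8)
The plan is to prove the contrapositive: assuming the Osgood condition \eqref{osgood} fails, i.e. $\int_a^\infty \d s/b(s)=\infty$ for every $a>0$, we show that the solution of \eqref{dirichlet} is global with probability one, that is $\tau_\infty=\infty$ almost surely. The starting point is the decomposition available for additive noise: write $u=v+Z$, where
\[
Z(t,x):=\sigma\int_0^t\!\!\int_0^1 p(t-s,x,y)\,W(\d y\,\d s)
\]
is the stochastic convolution, i.e. the solution of the linear equation $\partial_t Z=\Delta Z+\sigma\dot W$ with zero initial datum and Dirichlet boundary conditions. Then $v:=u-Z$ satisfies, on $[0,\tau_\infty)$, the mild equation
\[
v(t,x)=\int_0^1 p(t,x,y)u_0(y)\,\d y+\int_0^t\!\!\int_0^1 p(t-s,x,y)\,b\bigl(v(s,y)+Z(s,y)\bigr)\,\d y\,\d s .
\]
The point of the decomposition is that $Z$ admits a modification that is continuous on $[0,\infty)\times[0,1]$, so that $Z_T^*:=\sup_{(t,x)\in[0,T]\times[0,1]}|Z(t,x)|$ is almost surely finite for each $T>0$, while the entire nonlinearity — which is nonnegative and nondecreasing on $(0,\infty)$ — sits inside $v$.

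Next, fix $T>0$ and work on the random interval $[0,\tau_\infty\wedge T)$. Since $b\ge0$, $u_0\ge0$ and $p\ge0$, the mild formula forces $v\ge0$; as $u$ and $Z$ are continuous, so is $v$, hence $M(t):=\sup_{x\in[0,1]}v(t,x)$ is continuous and nonnegative with $M(0)=\norm{u_0}_\infty<\infty$. Put $C_T:=\sup_{-Z_T^*\le r\le 0}b(r)<\infty$ and, for $m\ge 0$,
\[
g(m):=\sup_{-Z_T^*\le r\le m+Z_T^*}b(r)=\max\bigl\{C_T,\,b(m+Z_T^*)\bigr\},
\]
the last equality using that $b$ is continuous and nondecreasing on $(0,\infty)$. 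Then $g$ is continuous and nondecreasing, and $b\bigl(v(s,y)+Z(s,y)\bigr)\le g(M(s))$ since $0\le v(s,y)\le M(s)$ and $|Z(s,y)|\le Z_T^*$. Moreover $\int^\infty\d m/g(m)=\infty$: this is trivial if $b$ is bounded, and if $b$ is unbounded then $g(m)=b(m+Z_T^*)$ for $m$ large, so divergence follows from the failure of \eqref{osgood}. Using $\int_0^1 p(t-s,x,y)\,\d y\le 1$, $\int_0^1 p(t,x,y)u_0(y)\,\d y\le\norm{u_0}_\infty$, and taking $\sup_x$ in the mild equation,
\[
M(t)\;\le\;\norm{u_0}_\infty+\int_0^t g\bigl(M(s)\bigr)\,\d s,\qquad 0\le t<\tau_\infty\wedge T .
\]

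Comparing $M$ with the (non-exploding) solution of $\dot N=g(N)+1$, $N(0)=\norm{u_0}_\infty$ — whose lifetime is $\int_{\norm{u_0}_\infty}^\infty \d r/(g(r)+1)=\infty$ — a standard Bihari-type nonlinear Gr\"onwall argument gives a finite bound $M(t)\le K_T<\infty$ for all $t<\tau_\infty\wedge T$, hence $\sup_x|u(t,x)|\le K_T+Z_T^*=:K_T'<\infty$ on $[0,\tau_\infty\wedge T)$. It then remains to rule out $\{\tau_\infty\le T\}$. On that event, $\tau_N\le\tau_\infty\le T$ for every $N$, so by continuity of $u_N$ and the definition of $\tau_N$ we have $\sup_x|u_N(\tau_N,x)|\ge N$; but $u_N=u$ on $[0,\tau_N)\subseteq[0,\tau_\infty\wedge T)$, so $\sup_x|u_N(t,x)|\le K_T'$ for $t<\tau_N$, and letting $t\uparrow\tau_N$ yields $N\le K_T'$. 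Since $K_T'<\infty$ almost surely and $N$ is arbitrary, $\P(\tau_\infty\le T)=0$; letting $T\to\infty$ gives $\tau_\infty=\infty$ almost surely.

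The step I expect to be the real difficulty is the reduction to the scalar inequality for $M(t)$ — dominating the noisy, sign-indefinite nonlinearity $b(v+Z)$ by a deterministic nondecreasing function of $M$ while keeping the Osgood integral divergent. This is where the structural hypotheses must be used with care: Assumption \ref{A} only provides monotonicity of $b$ on $(0,\infty)$, not the convexity used in \cite{BonderG}, so the behaviour of $b$ near $0$ and on $(-\infty,0)$ — the region into which $Z$ can push $v+Z$ — has to be absorbed into the constant $C_T$ as above. By comparison, the continuity of the stochastic convolution on compact space-time sets, the Bihari comparison, and the final bookkeeping with the truncations $b_N$ and stopping times $\tau_N$ are routine.
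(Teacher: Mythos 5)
Your proof is correct, and it is essentially the paper's argument run in the contrapositive direction. The paper argues directly: on the event of finite-time blow-up it bounds the stochastic convolution by its supremum $M$ over the compact time interval $[0,T]\times[0,1]$, deduces $u\geq-\sigma M$, splits the drift integral according to the sign of $u$ so that the contribution from $\{u\leq 0\}$ is absorbed into a finite constant $K$ (this is exactly where, like you, it must cope with the fact that Assumption \ref{A} gives monotonicity of $b$ only on $(0,\infty)$), and arrives at $Y_t\leq a+\sigma M+K+\int_0^t b(Y_s)\,\d s$ for $Y_t=\sup_x u(t,x)$; since $Y$ explodes, the comparison ODE $\dot Z=b(Z)$ started from $a+\sigma M+K+1$ must explode, forcing \eqref{osgood} (this is the first half of the proof of Proposition \ref{leon-villa}). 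You instead assume \eqref{osgood} fails, peel off the noise via $u=v+Z$ with $v\geq0$, dominate $b(v+Z)$ by the modified nondecreasing nonlinearity $g(M(s))$ whose Osgood integral still diverges, and close with a Bihari comparison against $\dot N=g(N)+1$. The two routes share the same engine --- a.s.\ finiteness of the stochastic convolution on compacts plus comparison of the spatial supremum with the scalar ODE $\dot y=b(y)$ --- and differ only in bookkeeping: your $u=v+Z$ decomposition and the function $g$ make the reduction to a one-dimensional inequality perhaps slightly cleaner and make explicit that non-Osgood drifts yield global solutions a.s.\ (not merely the negation of ``blow-up with positive probability''), while the paper's direct version avoids introducing $g$ and transfers with fewer changes to the settings of Theorems \ref{Bonder-general} and \ref{last}. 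All the individual steps you flag as delicate (nonnegativity of $v$, divergence of $\int^\infty\d m/g(m)$, the $\tau_N$ bookkeeping) check out.
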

Together with the result of Bonder and Groisman, this can be seen as an extension of a similar result for stochastic differential equations with additive noise. Indeed in later case, Feller's test for explosions says that the Osgood condition is necessary and sufficient for blow up of the solution when the noise is a Brownian motion.  We will later describe a new method for proving this without appealing to Feller's test that works for a larger class of processes including the bifractional Brownian motion. This is due to \cite{leonvilla} which was also the inspiration for the proof of the above theorem.  
\vskip 11pt
We also consider equation (\ref{dirichlet}) in the whole line, that is, 
\begin{equation}\label{line}
\left|\begin{aligned}
\frac{\partial u(t,\,x)}{\partial t}&= \Delta u(t,\,x)+b(u(t,\,x))+\sigma \dot{W}(t,\,x)\quad x\in \R,\; t>0,\\
u(0,\,x)&=u_0(x).
\end{aligned}\right.
\end{equation}
As before, we look at the random field solution $u=\{u(t,x)\}_{(t,x) \in \R^+\times \R}$ which in this case, satisfies the following integral equation
\begin{equation}\label{mild-line}
\begin{split}
u(t,\,x)=\int_\R G(t,\,x,\,y)u_0(y)\,\d y&+\int_0^t\int_\R G(t-s,\,x,\,y)b(u(s,\,y))\,\d y\,\d s\\
&+\sigma\int_0^t\int_\R G(t-s,\,x,\,y)W(\d y\,\d s),
\end{split}
\end{equation}
where now $G(t,\,x,\,y)$ is the heat kernel associated with the Laplacian defined on the whole line. Here, the existence of a local solution is not straightforward. We will be more precise about this later.  Next, we describe our second main result which is a non-existence result. 

\begin{theorem}\label{line-blowup}
Suppose that Assumption \ref{A} holds. Then, if $b$ satisfies the Osgood condition \textnormal{(\ref{osgood})}, then almost surely, there is no global solution to equation \eqref{line}.
\end{theorem}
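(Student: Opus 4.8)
The plan is to reduce the whole-line problem to the Dirichlet problem on a bounded interval, where Bonder and Groisman's theorem gives blow-up. Suppose for contradiction that with positive probability there is a global random field solution $u$ to \eqref{line} satisfying \eqref{mild-line}. Fix any interval $I=[x_0, x_0+1]$ (after rescaling, the unit interval) and let $p_I(t,x,y)$ denote the Dirichlet heat kernel on $I$. The key comparison is the elementary domination $G(t,x,y) \ge p_I(t,x,y)$ for $x,y \in I$, which holds because killing the Brownian motion at the endpoints of $I$ only removes mass. First I would introduce the process $v$ solving \eqref{dirichlet} on $I$ with the \emph{same} driving noise $W$ restricted to $\R^+\times I$, with zero initial data (or with $u_0$ restricted to $I$; zero is cleanest and still nonnegative). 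A Gronwall/comparison argument — using that $b$ is nondecreasing on $(0,\infty)$ and nonnegative, that the stochastic convolutions against $G$ and against $p_I$ can be coupled, and that the deterministic heat semigroup on $I$ is dominated by the one on $\R$ — should yield $u(t,x) \ge v(t,x)$ for all $(t,x)\in \R^+\times I$ on the event that $u$ is global. Hence if $u$ is global on an event of positive probability, so is $v$.

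The second ingredient is Bonder–Groisman: since $b$ is nonnegative, nondecreasing on $(0,\infty)$ and satisfies the Osgood condition \eqref{osgood}, the solution $v$ to the Dirichlet problem \eqref{dirichlet} on $I$ blows up in finite time almost surely. Strictly speaking their theorem is stated for convex $b$; I would address this by replacing $b$ with a convex minorant $\tilde b \le b$ that still satisfies Osgood (for instance, take $\tilde b$ to be the largest convex function below $b$ on $[a,\infty)$, linear near $a$; because $b$ is nondecreasing and Osgood-integrable, its convex minorant inherits the Osgood property after a routine computation), and observe that $v \ge \tilde v$ where $\tilde v$ solves the Dirichlet problem with $\tilde b$. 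Then $\tilde v$ blows up a.s.\ by \cite{BonderG}, forcing $v$ to blow up a.s., contradicting that $v$ is global with positive probability. This contradiction shows that no global solution to \eqref{line} exists with positive probability, i.e.\ almost surely there is no global solution.

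The main obstacle I anticipate is making the comparison $u \ge v$ fully rigorous. There are three subtleties: (i) one must work with the truncations $b_N$ and the stopping times $\tau_N$ from the excerpt, prove the comparison for each $N$, and pass to the limit, which requires knowing the solutions are genuine random fields up to their blow-up times; (ii) the stochastic convolution term is common to $u$ and $v$ only after one writes both Walsh integrals against the \emph{same} white noise and uses $G - p_I \ge 0$ together with the positivity-preserving property of the relevant semigroups — a standard but somewhat delicate stochastic comparison (one convenient route is to subtract the two mild equations, bound the $b$-difference using monotonicity on the positive half-line, and invoke a comparison principle for SPDEs as in the literature on stochastic heat equations with additive noise); and (iii) the whole-line equation \eqref{line} has its own local existence issue flagged by the authors, so the argument should really be phrased as: \emph{on the event that a global solution exists}, the above domination holds and yields a contradiction. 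I would also need to handle the possibility that $u_0\equiv 0$ on $I$ produces a solution that is not immediately positive — but the stochastic convolution makes $v$ strictly positive on a set of times of positive measure with probability one, which combined with the monotone, nonnegative nonlinearity is enough to start the blow-up mechanism.
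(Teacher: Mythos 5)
Your reduction hinges on the pathwise domination $u\ge v$ on $\R^+\times I$, where $v$ solves the Dirichlet problem on $I$ driven by the same noise, and this step fails for additive noise. The kernel inequality $G\ge p_I$ controls the deterministic terms but not the stochastic ones: the difference of the two stochastic convolutions, $\int_0^t\int_\R G(t-s,x,y)W(\d y\,\d s)-\int_0^t\int_I p_I(t-s,x,y)W(\d y\,\d s)$, is at each fixed $(t,x)$ a nondegenerate centered Gaussian random variable, hence negative with probability $1/2$; no coupling removes this, and already in the case $b\equiv 0$, $u_0\equiv 0$ one has $u<v$ somewhere almost surely. Equivalently, in the weak formulation $w=u-v$ solves a heat equation on $I$ with lateral boundary data $w=u$ on $\partial I$, so the comparison principle would require $u\ge 0$ on $\partial I\times(0,T)$ --- which fails a.s., since with additive noise the whole-line solution takes negative values at any fixed spatial point. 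This is exactly why the paper does not compare domains: it instead shifts time to an epoch $[t_n,t_n+1]$ where, by the law of the iterated logarithm for the bifractional Brownian motion $t\mapsto g(t,x)$ combined with a uniform modulus of continuity (Propositions \ref{supnas} and \ref{sup2}), the stochastic convolution is uniformly large on $[t_n,t_n+1]\times(0,1)$; the on-diagonal bound $G(t,x,y)\ge c\,t^{-1/2}$ for $|x-y|\le t^{1/2}$ then reduces the problem to the scalar integral inequality of Proposition \ref{leon-villa}, which forces blow-up within one unit of time under \eqref{osgood}.

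A second, independent gap: even granting the domination, Bonder--Groisman requires $b$ convex, while Theorem \ref{line-blowup} assumes only that $b$ is nondecreasing. Your fix via the greatest convex minorant does not go through, because the convex minorant of a nondecreasing Osgood function need not be Osgood. For instance, let $b\equiv e^{e^n}$ on $[a_n,a_{n+1})$ with $a_{n+1}-a_n=e^{e^n}/n^2$; then $\int^{\infty}\d s/b(s)=\sum_n n^{-2}<\infty$, yet the convex envelope is essentially linear with slope of order $n^2$ on $[a_n,a_{n+1}]$, so that $\int_{a_n}^{a_{n+1}}\d s/\tilde b(s)$ is of order $n^{-2}(e^{n}-e^{n-1})\rightarrow\infty$ and $\tilde b$ fails \eqref{osgood}. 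At best your route would prove the theorem for convex $b$, and only if the domain comparison could be repaired, which for additive noise it cannot.
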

Here we use a completely different approach to that of \cite{BonderG}.  We use the almost sure growth properties of the stochastic term together with an observation borrowed from \cite{leonvilla} to arrive at our result. This observation is contained in the statement and proof of Proposition \ref{leon-villa} below. A key step in our strategy is to use the fact that for each $x$, the stochastic term in \eqref{mild-line} is a bifractional Brownian motion. We will use various continuity estimates as a well as the law of iterated logarithm for bifractional Brownian motion to arrive at the growth properties we need.

Our method is flexible enough so that some of the results above can extended to a wider class of equations. We describe these results next.  Consider the following equation 
\begin{equation}\label{dirichlet-general}
\left|\begin{aligned}
\frac{\partial u(t, x)}{\partial t}&=\cL u(t,x)+b(u(t,x))+ \dot{F}(t,\,x), \quad x \in B_1(0), \; t>0,\\
u(0,\,x)&=u_0(x),
\end{aligned}\right.
\end{equation}
where $B_r(z)$ denotes the (open) ball of center $z$ and radius $r$ in $\R^d$. Here $\cL$ is the generator of an $\alpha$-stable process killed upon exiting the ball $B_1(0)$ and $\dot{F}$ is a Gaussian noise which is white in time and has a spatial correlation given by the Riesz kernel. That is,
\begin{equation*} 
\E ( \dot{F}(t,x) \dot{F}(s,y))=\delta_0(t-s) f(x-y),
\end{equation*}
where $f(x)=\vert x \vert^{-\beta}$, $0<\beta<d$. 
The homogeneous Dirichlet boundary condition is given by
\begin{equation*}
u(t,x)=0 \qquad x \in \R^d \setminus B_1(0), \; \;t>0.
\end{equation*}

As before, the solution to equation (\ref{dirichlet-general}) is a jointly measurable adapted  random field $u=\left\{ u(t,x)\right\}_{t> 0, x \in  B_1(0)}$  satisfying the integral equation
\begin{equation}\label{mild:coloureda}
 \begin{split}
u(t,\,x)=\int_{B_1(0)} p_{\alpha}(t,\,x,\,y)u_0(y)\,\d y&+\int_0^t\int_{B_1(0)} p_{\alpha}(t-s,\,x,\,y)b(u(s,\,y))\,\d y\,\d s\\
&+\sigma\int_0^t\int_{B_1(0)} p_{\alpha}(t-s,\,x,\,y)F(\d y\,\d s),
\end{split}
\end{equation}
where $p_{\alpha}(t,\,x,\,y)$ is the Dirichlet fractional heat kernel. Recall that we have the following spectral decomposition
\begin{equation} \label{phi1}
p_{\alpha}(t,\,x,\,y)=\sum_{n=1}^{\infty} e^{-\lambda_n t} \phi_n(x) \phi_n(y)\quad\text{for all}\quad x,\,y \in B_1(0),\quad t>0,
\end{equation}
where $\{\phi_n\}_{n \geq 1}$ is an orthonormal basis of $L^2(B_1(0))$ and $0<\lambda_1<\lambda_2\leq \lambda_3 \leq \cdots$ is a sequence of positive numbers such that, for every $n \geq 1$,
\begin{equation*}
\begin{cases}
-(-\Delta)^{\alpha/2} \phi_n(x)=-\lambda_n \phi_n(x) \quad & x \in B_1(0),\\
\phi_n(x)=0 \quad &x \in \R^d\setminus B_1(0).
\end{cases}
\end{equation*}

As for equation (\ref{dirichlet}), one can easily show that if $\beta<\alpha$, then for each $N \geq \Vert u_0 \Vert_{\infty}$, there exists a unique local random field solution $u(t,x)$ to equation (\ref{dirichlet-general}) defined for all $x \in B_1(0)$ and $t \in [0, \tau_N)$, where
\begin{equation*}
\tau_N:=\inf \left\{ t>0: \sup_{x\in B_1(0)}|u_N(t,\,x)|>N\right\}, 
\end{equation*}
$\inf \emptyset:=\infty$,
and $u_N(t,x)$ is the solution to equation (\ref{dirichlet-general}) where $b$ is replaced by $b_N$ defined in (\ref{bn}).  The condition $\beta<\alpha$ ensures that the stochastic integral in (\ref{mild:coloureda}) is well-defined and almost surely continuous; see Remark \ref{r1} below.

The next result is the extension to equation  (\ref{dirichlet-general}) of Bonder and Groisman theorem and of Theorem \ref{theo-dirichlet}.
\begin{theorem}\label{Bonder-general}
Suppose that Assumption \ref{A} holds. Then if $b$ satisfies the Osgood condition \eqref{osgood} and under the additional assumption that $b$ is convex, the solution to \eqref{dirichlet-general} blows up in finite time almost surely. On the other hand, if the solution blows up in finite time with positive probability, then $b$ satisfies  the Osgood condition \eqref{osgood}.
\end{theorem}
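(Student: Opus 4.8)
The plan is to prove Theorem~\ref{Bonder-general} by running the two halves essentially in parallel with the arguments already sketched for Theorem~\ref{theo-dirichlet} (the $[0,1]$ case with $\cL=\Delta$) and for the Bonder--Groisman-type blow-up statement, replacing the interval $[0,1]$ and the heat kernel $p(t,x,y)$ by the ball $B_1(0)$ and the Dirichlet fractional heat kernel $p_\alpha(t,x,y)$, with the Riesz-kernel-correlated noise $\dot F$ in place of space-time white noise. The condition $\beta<\alpha$ guarantees (Remark~\ref{r1}) that the stochastic convolution $Z(t,x):=\sigma\int_0^t\int_{B_1(0)}p_\alpha(t-s,x,y)F(\d y\,\d s)$ is a well-defined, almost surely continuous random field, so both directions are not vacuous.

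\textbf{Sufficiency (Osgood $+$ convexity $\Rightarrow$ blow up a.s.).} Here I would mimic the Bonder--Groisman strategy adapted to the fractional setting. Write $u=v+Z$, where $v$ solves the deterministic PDE $\partial_t v=\cL v+b(v+Z)$ with the same initial data and zero exterior condition, so $v$ has the mild form $v(t,x)=\int p_\alpha(t,x,y)u_0(y)\,\d y+\int_0^t\int p_\alpha(t-s,x,y)b(u(s,y))\,\d y\,\d s$. Test against the principal eigenfunction $\phi_1>0$: set $V(t):=\int_{B_1(0)}v(t,y)\phi_1(y)\,\d y$ and $N(t):=\int_{B_1(0)}Z(t,y)\phi_1(y)\,\d y$, which is a.s.\ continuous hence locally bounded on any $[0,T]$. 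Using $\cL\phi_1=-\lambda_1\phi_1$ and Jensen's inequality (this is where convexity of $b$ and $\int\phi_1=$ const., after normalizing, enters), one gets a differential inequality $V'(t)\ge -\lambda_1 V(t)+c\,b\big(V(t)-\|N\|_{[0,T],\infty}\big)$ on the survival interval. Because $b$ is nondecreasing and satisfies the Osgood condition \eqref{osgood}, a comparison with the ODE $\dot y=-\lambda_1 y+c\,b(y-M)$ shows $y$, hence $V$, hence $\sup_x u(t,x)$, must reach $+\infty$ in finite time on the event (of full probability) that $N$ stays bounded on each compact interval; one runs this on $[0,T]$ for each $T$ and lets $T\to\infty$. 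A minor wrinkle is ensuring positivity, i.e.\ $v\ge0$, which follows from $u_0\ge0$, $b\ge0$, and positivity of $p_\alpha$.

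\textbf{Necessity (blow up with positive probability $\Rightarrow$ Osgood).} This is the genuinely new half and is where I expect the main work. Following the León--Villa-inspired observation packaged in Proposition~\ref{leon-villa} (stated later in the paper), the idea is: if $b$ fails the Osgood condition, i.e.\ $\int_a^\infty \d s/b(s)=\infty$, then $\sup_x u(t,x)$ cannot explode in finite time. One again decomposes $u=v+Z$; on any finite horizon $[0,T]$ the stochastic part $Z$ is a.s.\ bounded, say $|Z(t,x)|\le M_T$. Comparing the mild equation for $v$ with the ODE $\dot y=b(y+M_T)$ (using monotonicity of $b$, positivity of the kernel, and the fact that $\int_{B_1(0)}p_\alpha(t-s,x,y)\,\d y\le 1$), one bounds $\sup_x v(t,x)$ above by the solution of that ODE started from $\|u_0\|_\infty$; since $\int^\infty \d s/b(s)=\infty$, this ODE solution is finite for all time, so no blow-up occurs on $[0,T]$ — contradicting blow-up with positive probability once we let $T\to\infty$. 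The main obstacle is handling the nonlocal operator $\cL$ and the colored noise carefully: specifically, (i) confirming $\int_{B_1(0)}p_\alpha(t,x,y)\,\d y\le 1$ and the needed sub/super-solution comparison principle for the fractional Dirichlet problem, and (ii) controlling $Z$ — its a.s.\ boundedness on compacts (for necessity) and, for the LIL-type growth used in the sufficiency/ necessity bookkeeping, the regularity estimates analogous to the bifractional-Brownian-motion facts invoked for \eqref{line}. These are exactly the estimates Remark~\ref{r1} and the $\beta<\alpha$ hypothesis are there to supply, so the proof is a matter of assembling them rather than inventing new machinery.
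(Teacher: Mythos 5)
Your necessity half is essentially the paper's argument, run contrapositively: on the blow-up horizon the stochastic convolution is a.s.\ bounded, monotonicity of $b$ lets you dominate $\sup_x u(t,x)$ by the solution of an autonomous ODE, and the comparison from Proposition \ref{leon-villa} forces the Osgood integral to be finite. The one detail you gloss over is that $b$ is only assumed nondecreasing on $(0,\infty)$, so the part of the drift integral coming from the region where $u\le 0$ must be treated separately; the paper splits into $\{-\sigma M\le u\le 0\}$ and $\{u>0\}$ and bounds the first contribution by a constant using continuity of $b$. That is a minor repair.

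The sufficiency half has a genuine gap. After projecting onto $\phi_1$ you freeze the noise on $[0,T]$, replace it by its sup norm $M$, and claim that the deterministic ODE $\dot y=-\lambda_1 y+c\,b(y-M)$ must reach $+\infty$ in finite time because $b$ satisfies the Osgood condition. That is false in general: the linear damping $-\lambda_1 y$ can dominate $b$ on a bounded range and create a stable equilibrium. Take $b(s)=s^2$ for $s>0$ and $b(s)=0$ otherwise (nonnegative, convex, nondecreasing on $(0,\infty)$, Osgood): the drift $-\lambda_1 y+c\,(y-M)^2$ is strictly negative on an open interval, so any solution started there converges to the lower fixed point and never explodes. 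Since the theorem asserts almost sure blow-up for every nonnegative continuous $u_0$, including arbitrarily small ones, no deterministic comparison obtained by bounding the noise can deliver the conclusion --- the noise is precisely what pushes the projected process past the unstable equilibrium. The paper's proof keeps the noise: it shows that $Y_t=\int_{B_1(0)}u(t,x)\,c\,\phi_1(x)\,\d x$ dominates the solution of the one-dimensional SDE $\d X_t=(-\lambda_1X_t+b(X_t))\,\d t+\d Z_t$ with $Z_t=\int_0^t\int_{B_1(0)}c\,\phi_1(y)\,F(\d y\,\d s)$, observes that $Z_t=\sqrt{\kappa}\,B_t$ is a Brownian motion with $\kappa=c^2\int_{B_1(0)\times B_1(0)}\phi_1(y)\phi_1(z)\vert y-z\vert^{-\beta}\,\d y\,\d z$ (this is the ``different constant'' relative to Bonder--Groisman), and then applies Feller's test for explosions, which is exactly where the Osgood condition and the Gaussian fluctuations combine to force explosion from any starting point. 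You need this, or an equivalent probabilistic mechanism, in place of the ODE comparison.
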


\begin{remark}\label{r0}
The proof of the first part of Theorem \ref{Bonder-general} is an adaptation of the proof in \cite{BonderG}. But our method can do better, it can be used to prove that $\inf_{x\in B(0,\,1-\epsilon)}u(t,\,x)$ blows up in finite time for any $\epsilon>0$. We leave the proof for future work.
\end{remark}

\begin{remark} \label{r1}
Theorem \ref{Bonder-general} holds for a general spatial correlation $f$, where $f:\R^d \rightarrow \R$ is a nonnegative and nonnegative definite (generalized) function, continuous on $\R^d \setminus\{0\}$, integrable in a neighborhood of $0$, and whose Fourier transform $\mathcal{F}f=\mu$ is a tempered measure satisfying 
\begin{equation} \label{g}
\int_{\R^d} \frac{\mu(\d\xi)}{(1+\vert \xi \vert^{\alpha})^{\rho}} < \infty,
\end{equation}
for some $\rho \in (0,1)$,
where $(\mathcal{F} f)(\xi)=\int_{\R^d} f(y) e^{i \langle y, \xi \rangle} \d y$.
 Condition \eqref{g} with $\rho=1$ implies the existence and uniqueness of solutions; see Dalang \cite{Dalang}. The slightly more stringent condition \eqref{g} ensures that the solution is almost surely continuous as well; see Sanz-Sol\'e and Sarr\`a \cite{Sanz-Sarra}. 
In particular, when $f$ is the Riesz kernel, then $\mu(\d\xi)=c \vert \xi \vert^{-(d-\beta)}\d\xi$ and condition \eqref{g} holds for any $\rho>\beta/\alpha$ whenever $\beta < \alpha$.
\end{remark}

Consider now equation (\ref{dirichlet-general}) in the whole space, that is,
\begin{equation}\label{line-general}
\left|\begin{aligned}
\frac{\partial u(t, x)}{\partial t}&=\cL u(t,x)+b(u(t,x))+ \dot{F}(t,\,x), \quad x \in \R^d, \; t>0,\\
u(0,\,x)&=u_0(x),
\end{aligned}\right.
\end{equation}
where all the parameters are the same as above except that now $\cL$ is associated with an $\alpha$-stable process defined on the whole space.

As before, we are looking at random field solutions satisfying the following integral equation
\begin{equation}\label{mild-colored}
\begin{split}
u(t,\,x)=\int_{\R^d} G_{\alpha}(t,\,x,\,y)u_0(y)\,\d y&+\int_0^t\int_{\R^d} G_{\alpha}(t-s,\,x,\,y)b(u(s,\,y))\,\d y\,\d s\\
&+\sigma\int_0^t\int_{\R^d} G_{\alpha}(t-s,\,x,\,y)F(\d y\,\d s),
\end{split}
\end{equation}
where now $G_{\alpha}(t,\,x,\,y)$ is the heat kernel in $\R^d$ for the $\alpha$-stable process.  Our final theorem is as follows.
\begin{theorem}\label{line-genral-blowup}
Suppose that Assumption \ref{A} holds. Then, if $b$ satisfies the Osgood condition \eqref{osgood}, then almost surely, there is no global solution to equation \eqref{line-general}.
\end{theorem}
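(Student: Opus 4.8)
The plan is to adapt the argument behind Theorem \ref{line-blowup}, with the one-dimensional heat kernel and the space-time white noise replaced by the $\alpha$-stable heat kernel $G_\alpha$ on $\R^d$ and the Riesz-correlated noise $\dot{F}$. I work on the almost sure event on which the stochastic convolution $w(t,x):=\sigma\int_0^t\int_{\R^d}G_\alpha(t-s,x,y)\,F(\d y\,\d s)$ enjoys the regularity and growth properties stated below, and I argue by contradiction: assume that on this event the solution $u$ to \eqref{line-general} is global. Since $u_0\ge0$, $b\ge0$ and $G_\alpha\ge0$, the first two terms on the right of \eqref{mild-colored} are nonnegative, so $u(t,x)\ge w(t,x)$ for all $(t,x)$; this is the only feature of the noise that is used.

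The facts about $w$ I would establish are: (i) $w$ is almost surely jointly continuous, which is where the standing assumption $\beta<\alpha$ enters, via the $L^p$ moment estimates of Remark \ref{r1}; (ii) for each fixed $x$, the centred Gaussian process $t\mapsto w(t,x)$ is a constant multiple of a bifractional Brownian motion with parameters $H=\tfrac12$ and $K=(\alpha-\beta)/\alpha$ --- this follows from the Fourier computation $\mathrm{Cov}\big(w(t,x),w(s,x)\big)=C_0\big[(t+s)^{K}-|t-s|^{K}\big]$, so in particular $\mathrm{Var}(w(t,x))\asymp t^{K}\to\infty$; consequently $w(\cdot,x)$ obeys a law of the iterated logarithm at infinity, and, using the intrinsic increment bound $\E|w(t,x)-w(t',x)|^2\le C|t-t'|^{K}$ (uniform in the base time) together with Borel--Cantelli, $\sup_{|t'-t|\le1}|w(t',x)-w(t,x)|\le C(\omega)\sqrt{\log(2+t)}$, uniformly over $x$ in compact sets; (iii) a spatial modulus estimate: from $\mathrm{Var}\big(w(t,x)-w(t,x')\big)\le C|x-x'|^{\alpha-\beta}$ \emph{uniformly in $t$} (again a Fourier estimate, finite precisely because $\beta<\alpha$), a Garsia--Rodemich--Rumsey argument plus Borel--Cantelli yields $\sup_{s\le T}\sup_{|x-x'|\le\delta,\;|x|\le1}|w(s,x)-w(s,x')|\le C(\omega)\sqrt{\log(2+T)}\,\delta^{\gamma}$ for every $\gamma<(\alpha-\beta)/2$. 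Items (ii) and (iii) are the direct analogues of the bifractional-Brownian-motion facts used for \eqref{line}.

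Granting these, I run the comparison at a single spatial point. Fix $x_0=0$, a small $\delta>0$, and put $\kappa_0:=\inf_{0<r\le1,\;|x|\le\delta/2}\int_{|y|\le\delta/2}G_\alpha(r,x,y)\,\d y$, which is strictly positive by the two-sided bounds for the $\alpha$-stable transition density. By (ii), $\limsup_{t\to\infty}w(t,0)=+\infty$ almost surely, so there is a random sequence $t_n\uparrow\infty$ with $w(t_n,0)\ge M_n$, where $M_n\to\infty$ at the law-of-iterated-logarithm rate $\sqrt{\mathrm{Var}(w(t_n,0))\log\log t_n}$, which dominates $\sqrt{\log t_n}$. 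Then (iii) forces $w(t_n,\cdot)\ge M_n/2$ on $B_\delta(0)$ for $n$ large, and (ii) in turn gives $w(t,\cdot)\ge M_n/4$ on $B_{\delta/2}(0)$ for all $t\in[t_n,t_n+1]$. Since $u\ge w$, on this space-time box the quantity $J(t):=\inf_{|x|\le\delta/2}u(t,x)$ satisfies $J\ge M_n/4>0$, so $b$ may be applied monotonically; discarding the nonnegative initial term and the $[0,t_n]$ part of the Duhamel integral in \eqref{mild-colored} and then taking $\inf_{|x|\le\delta/2}$ gives
\[
J(t)\ \ge\ \frac{M_n}{4}+\kappa_0\int_{t_n}^{t}b\big(J(s)\big)\,\d s,\qquad t\in[t_n,t_n+1].
\]
Comparing with the scalar ODE $z'=\kappa_0\,b(z)$, $z(t_n)=M_n/4$, whose explosion time is $\tfrac1{\kappa_0}\int_{M_n/4}^{\infty}\d s/b(s)$ and hence tends to $0$ as $n\to\infty$ by the Osgood condition, Proposition \ref{leon-villa} shows that $J$, and therefore $u$, blows up before time $t_n+1$ once $n$ is large. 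This contradicts globality; as the underlying event has probability one, almost surely there is no global solution.

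I expect the main difficulty to lie in steps (ii)--(iii). For equation \eqref{line} the fibre process $t\mapsto w(t,x)$ is literally a bifractional Brownian motion with a textbook law of the iterated logarithm, whereas here the bifractional covariance identity and the law of the iterated logarithm have to be established directly from the $\alpha$-stable semigroup and the Riesz kernel; the genuinely delicate point is to obtain the temporal and spatial increment estimates with constants that grow at most logarithmically in $t$, so that, along the whole time axis, they are dominated by the law-of-iterated-logarithm growth of $\sup_{s\le t}w(s,0)$. The condition $\beta<\alpha$ is exactly what makes the relevant space-time integrals converge. A secondary and essentially routine point is the strict positivity of $\kappa_0$, which follows from standard two-sided heat-kernel bounds for $\alpha$-stable processes.
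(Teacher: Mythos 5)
Your proposal is correct and follows essentially the same route as the paper: the paper's own proof of Theorem \ref{line-genral-blowup} simply invokes Proposition \ref{holderbis}(d) (the law of the iterated logarithm plus uniform increment estimates for $g_{\alpha,\beta}$ over space--time boxes) together with the lower bound $G_\alpha(t,x,y)\ge c\,t^{-d/\alpha}$ for $|x-y|\le t^{1/\alpha}$, and then repeats the comparison argument of Theorem \ref{line-blowup} and Proposition \ref{leon-villa}, which is exactly what you do. (A minor aside: your identification $H=\tfrac12$, $K=(\alpha-\beta)/\alpha$ is the one consistent with the covariance formula, versus the paper's $H=\tfrac{\alpha-\beta}{2}$, $K=\tfrac1\alpha$; both give the same product $HK=\tfrac{\alpha-\beta}{2\alpha}$, which is all that the argument uses.)
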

We end this introduction with some remarks concerning local existence of solutions when the equations are defined on the whole space. D. Khoshnevisan pointed to us that since for any fixed $t>0$, the last term of \eqref{mild-line} grows like $\sqrt{\log x}$ as $x$ goes to infinity, the solution to \eqref{mild-line} might blow up instantaneously. That is, any solution of \eqref{mild-line} can blow up for any $t>0$ so that there is no local solution. In the deterministic setting, similar phenomenon arises; see for instance \cite{Vasquez} where the exponential reaction-diffusion is studied. Proving such non-existence results is beyond the scope of this paper where the main concern is non-existence of global solution. The above result for instance makes no claim about the existence of a local solution. 

The rest of the paper is organized as follows. In Section 2 we give some preliminary results needed for the proofs of our results. Section 3 is devoted to the proofs of Theorems \ref{theo-dirichlet} and \ref{line-blowup}.  Theorems \ref{Bonder-general} and \ref{line-genral-blowup} are proved in Section 4. Finally, in Section 5 we discuss the extension of the results to  the  multiplicative noise case. 

\section{Preliminary information and estimates}
In this section, we give some background information needed for the proof of our results. We start off with a deterministic result about integral equations. This is taken from \cite{leonvilla} where it is used to show blow up for stochastic differential equations. We include a proof since it contains the main ideas of our method. 

\subsection{The Osgood condition for integral equations}

We start off with the following remark. Suppose that $b$ satisfies Assumption \ref{A} and consider the following integral equation  for $a \geq 0$
\begin{equation*}
y(t)=a+\int_0^tb(y(s))\,\d s, \quad t \geq 0.
\end{equation*}
By Picard-Lindel\"of theorem this equation admits a unique solution up to its blow up time defined as
\begin{equation*}
T:=\sup \{t>0: |y(t)|<\infty \},
\end{equation*}
where $\sup \emptyset :=-\infty$. Then we say that the solution blows up in finite time if $T<\infty$.
One can show  that this blow up time is equal to the following 
\begin{equation*}
\int_a^\infty\frac{1}{b(s)}\,\d s.
\end{equation*}
Therefore, we have that the solution blows up in finite time if and only of $\int_a^\infty\frac{1}{b(s)}\,\d s<\infty$.

We next consider the following assumption. In the upcoming sections, we will show that a large class of stochastic processes verify a similar condition.

\begin{assumption}\label{B}
$g: [0,\,\infty) \rightarrow \R$ is a continuous function such that 
\begin{equation*}
\limsup_{t\rightarrow \infty} \inf_{0\leq h\leq 1} g(t+h)=\infty.
\end{equation*}
\end{assumption}

\begin{proposition}\label{leon-villa}
Let $a\geq 0$ and suppose that Assumptions \ref{A} and \ref{B} hold.  Then the solution to the integral equation
\begin{equation}\label{ODE}
X_t=a+\int_0^tb(X_s)\,\d s+g(t)
\end{equation}
blows up in finite time if and only if the function $b$ satisfies the Osgood condition \eqref{osgood}.
\end{proposition}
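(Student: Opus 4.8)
The plan is to reduce the problem to a comparison with the deterministic (Osgood) ODE by absorbing the continuous perturbation $g$ into the drift, exploiting that $b$ is nondecreasing on $(0,\infty)$. For the "only if" direction, suppose $b$ does not satisfy \eqref{osgood}, i.e. $\int_a^\infty 1/b(s)\,\d s=\infty$ for every $a>0$; I want to show the solution $X_t$ of \eqref{ODE} is global. First I would fix an arbitrary finite time horizon $T_0$ and set $m:=\min_{0\le t\le T_0} g(t)$ and $M:=\max_{0\le t\le T_0} g(t)$, both finite since $g$ is continuous. Writing $Y_t:=X_t-g(t)$, one has $Y_t=a+\int_0^t b(Y_s+g(s))\,\d s$, and since $Y_s+g(s)=X_s$, I can dominate $b(X_s)\le b(|X_s|)\le b(\sup_{r\le s}|X_r|)$ using monotonicity (handling the sign of $X_s$ by the local Lipschitz, hence locally bounded, behaviour of $b$ near the origin). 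A Gronwall/bootstrap argument then bounds $\sup_{t\le T_0}|X_t|$ by the solution of the pure Osgood ODE $z(t)=C+\int_0^t b(z(s))\,\d s$ with $C=C(a,T_0)$, which by the remark preceding the proposition is finite on $[0,\infty)$ precisely because $\int^\infty 1/b=\infty$. Since $T_0$ was arbitrary, $X$ is global.

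For the "if" direction — the substantive one — assume $b$ satisfies \eqref{osgood}; I must show $X$ blows up in finite time. Here Assumption \ref{B} is the engine: there is a sequence $t_k\to\infty$ with $\inf_{0\le h\le 1} g(t_k+h)\to\infty$. The idea is that once $g$ is large on a unit interval $[t_k,t_k+1]$, and provided $X_{t_k}$ is not too negative, the value $X_{t_k+h}\ge \big(X_{t_k}+\text{(drift contribution)}\big)+g(t_k+h)-g(t_k)$ becomes large; then on the subsequent interval the drift term $\int b(X_s)\,\d s$, with $X_s$ now large and $b$ nondecreasing, dominates and forces a finite-time explosion by comparison with $y(t)=A+\int_0^t b(y(s))\,\d s$ whose blow-up time $\int_A^\infty 1/b(s)\,\d s$ is finite. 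Concretely, I would (i) show that $X$ cannot drift to $-\infty$ in finite time without the statement being vacuous — since $b\ge 0$, the drift is nondecreasing, so $X_t\ge a+g(t)$ for as long as the solution exists, and in particular $X$ is bounded below on any interval where $g$ is; (ii) pick $k$ large enough that $c_k:=\inf_{0\le h\le 1}g(t_k+h)$ exceeds any threshold, and use $X_{t_k+h}\ge a+g(t_k+h)\ge a+c_k$ together with monotonicity of $b$ to get, for $t\ge t_k$ (and while the solution persists), $X_t\ge (a+c_k)+\int_{t_k}^t b(a+c_k)\,\d s$ — but this linear lower bound is not yet enough, so (iii) iterate: feed the improved lower bound back into $\int b(X_s)\,\d s$, obtaining $X_t\ge \xi(t-t_k)$ where $\xi$ solves $\xi(r)=(a+c_k)+\int_0^r b(\xi(s))\,\d s$, whose explosion time is $\int_{a+c_k}^\infty 1/b(s)\,\d s<\infty$. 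Hence $X$ blows up before time $t_k+\int_{a+c_k}^\infty 1/b(s)\,\d s<\infty$.

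The comparison principle underlying both directions is the elementary fact that if $\phi(t)=\alpha+\int_0^t b(\phi(s))\,\d s + g(t)$ and $\psi(t)=\alpha'+\int_0^t b(\psi(s))\,\d s$ with $\alpha'\le \alpha+\inf g$ (resp. $\ge \alpha+\sup g$) on the relevant interval, then $\psi\le\phi$ (resp. $\phi\le\psi$) there — proved by considering the first crossing time and using that $b$ is nondecreasing so the integrand inequality is preserved. I would state this as a short lemma or inline it. The main obstacle, and the place requiring care, is the bookkeeping in step (iii) of the "if" direction: one must make the iteration rigorous (e.g. by a Picard-type monotone iteration $X^{(0)}\equiv a+c_k$ on $[t_k,\cdot)$, $X^{(n+1)}(t)=(a+c_k)+\int_{t_k}^t b(X^{(n)}(s))\,\d s$, showing $X^{(n)}\uparrow\xi$ and $X\ge X^{(n)}$ for each $n$ on the existence interval) and, simultaneously, control the behaviour of $X$ on $[0,t_k]$ to guarantee $X_{t_k}$ is finite (it is, since $t_k$ is a fixed time and we may assume the solution has not already blown up — if it has, we are done). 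A secondary technical point is the sign of $X$ near $0$: since $b:\R\to\R^+$ is only assumed nondecreasing on $(0,\infty)$, I should note $b$ is bounded on $[-R,R]$ for each $R$ (local Lipschitz), so negative excursions of $X$ contribute a bounded, hence harmless, amount to the drift.
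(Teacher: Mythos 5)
Your proposal is correct and follows essentially the same route as the paper: both directions reduce to a comparison with the deterministic Osgood ODE, using the boundedness of $g$ on compact intervals for the upper comparison and Assumption \ref{B} (a sequence $t_k\to\infty$ with $\inf_{0\le h\le 1}g(t_k+h)\to\infty$) for the lower one, exactly as in the paper's argument. The only slip is the inequality $b(X_s)\le b(|X_s|)$, which does not follow from monotonicity on $(0,\infty)$ alone, but your own parenthetical fix --- $X_t\ge a+g(t)$ is bounded below on compacts, so $b$ restricted to the negative excursions is bounded --- repairs it and matches what the paper implicitly does.
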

 \begin{proof}
 Suppose that the solution blows up at finite time $T$. Since $g$ is continuous, we can set
 \begin{equation*}
M:=\sup_{0\leq s\leq T}|g(s)|.
 \end{equation*}
Let $0\leq t\leq T$. Upon noting that $b$ is nonnegative,  \eqref{ODE} gives
\begin{equation*}
X_t\leq a+M+\int_0^tb(X_s)\,\d s.
\end{equation*}
The nonnegativity of $b$ together with the continuity of $g$ imply that $X_t$ can only blow up to {\it positive infinity}. Let $Y_t=a+M+1+\int_0^tb(Y_s)\,\d s$. Then by a standard comparison result, we have $X_t\leq Y_t$ on $[0,\,T]$.  But since $X_t$ blows up at time $T$, $Y_t$ should also blow up by time $T$. This means that $b$ satisfies the Osgood condition \eqref{osgood}. 
 
 We now suppose that $X_t$ does not blow up in finite time. Let $\{ t_n\}_{n=1}^\infty$ be some sequence which tends to infinity. The nonnegativity of $b$ implies that
 \begin{align*}
 X_{t+t_n}&\geq a+\int_{t_n}^{t+t_n}b(X_s)\,\d s+g(t+t_n)\\
 &\geq a+\int_{0}^{t}b(X_{s+t_n})\,\d s+g(t+t_n)\\
 &\geq a+\inf_{0\leq h\leq 1}g(h+t_n)+\int_{0}^{t}b(X_{s+t_n})\,\d s,
\end{align*}
where the last inequality holds whenever $0\leq t \leq 1$. This means that $X_{t+t_n}\geq Z_t$ where
\begin{equation*}
Z_t=\frac{1}{2}\left(a+\inf_{0\leq h\leq 1}g(h+t_n)\right)+\int_{0}^{t}b(Z_s)\,\d s.
\end{equation*}
Since we are assuming that $X_t$ does not blow up in finite time, the blow up time of $Z_t$ has to be greater than 1, which implies that 
\begin{equation*}
\int_{\frac{1}{2}(a+\inf_{0\leq h\leq 1}g(h+t_n))}^\infty \frac{1}{b(s)}\,\d s>1.
\end{equation*}
But from Assumption \ref{B}, we can find a sequence $t_n\rightarrow \infty$ such that $\frac{1}{2}(a+\inf_{0\leq h\leq 1}g(h+t_n))\rightarrow \infty$. This  contradicts  the  Osgood condition (\ref{osgood})
and the proof is complete.
 \end{proof}
As mentioned in the introduction, the above result provides an alternative way to prove blow-up for stochastic differential equations of the following type,
\begin{equation*}
\d X_t=b(X_t)\,\d t+ \d B_t, \quad X_0=a,
\end{equation*}
where $B_t$ is a Brownian motion.
This can be written as the following integral equation,
\begin{equation*}
X_t=a+\int_0^tb(X_s)\,\d s+ B_t.
\end{equation*}
We can now show that almost surely $B_t$ satisfies Assumption \ref{B} above. Hence the Osgood condition is a necessary and sufficient condition for blow-up of the solution to the above equation. As showed in \cite{leonvilla}, one can replace the Brownian motion by a more general class of processes including the bifractional Brownian motion for which Feller's test for explosions is not applicable.

 \subsection{The bifractional Brownian motion and related results}
 
 The bifractional Brownian motion introduced in \cite{HoudreVilla} is a generalization of the
fractional Brownian motion. It is defined 
as a centered Gaussian process $B^{H,K}=(B_t^{H,K}, t \geq 0)$
with covariance
$$
R^{H,K}(t,s)=2^{-K} 
\left( (t^{2H}+s^{2H})^K- \vert t-s\vert^{2HK} \right),
$$
where $H \in (0,1)$ and $K \in (0,1]$. Note that if $K=1$, then $B^{H,1}$ is a fractional Brownian motion with Hurst parameter $H$.  The bifractional Brownian motion is H\"older continuous for any exponent less that $HK$. Moreover, it satisfies the following law of iterated logarithm;  see for instance Lemma 4.1 of  \cite{leonvilla} for an idea of the proof and further references. Set
$$
\psi_{H,K}(t):=t^{HK}\sqrt{2 \log \log t}, \quad t>e.
$$
\begin{lemma} \label{lil}
Almost surely,
$$
\limsup_{t \rightarrow \infty} \frac{B^{H,K}_t}{\psi_{H,K}(t)}=1.
$$
\end{lemma}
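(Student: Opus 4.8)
The plan is to reduce the law of the iterated logarithm for $B^{H,K}$ to the classical Kolmogorov LIL for standard Brownian motion via a comparison with fractional Brownian motion, exploiting the known decomposition of the bifractional Brownian motion. Specifically, by the result of Lei and Nualart, one can write $B^{H,K}_t = c_{H,K}\, X_t + Y_t$, where $X_t$ is (up to a constant) a fractional Brownian motion with Hurst parameter $HK$ and $Y_t$ is a centered Gaussian process with $C^\infty$ sample paths on $(0,\infty)$, independent of $X$, whose variance decays polynomially. Since the $\psi_{H,K}$-normalization is exactly the fBm$(HK)$ normalization and $Y_t / \psi_{H,K}(t) \to 0$ almost surely (because $\mathrm{Var}(Y_t)$ grows at most like $t^{2HK-\varepsilon}$, or is even bounded, so a Borel--Cantelli argument along a geometric subsequence kills it), the $\limsup$ is governed entirely by the fBm term. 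First I would therefore establish the LIL for fractional Brownian motion $W^{HK}$ with the stated constant $1$.

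For the fBm LIL itself, the cleanest route is the upper bound via a chaining/Gaussian-concentration argument and the lower bound via independence of increments over a sparse sequence. For the upper bound: fix $q>1$ and the times $t_n = q^n$; on each block $[t_n, t_{n+1}]$ one controls $\sup_{t \in [t_n,t_{n+1}]} W^{HK}_t$ using the Borell--TIS inequality together with the fact that $\mathrm{Var}(W^{HK}_t) = t^{2HK}$ and the Dudley entropy bound for the modulus of continuity of fBm; taking $q \downarrow 1$ after Borel--Cantelli gives $\limsup \le 1$. For the lower bound: the increments $W^{HK}_{t_{n+1}} - W^{HK}_{t_n}$ are not independent, but using the long-range structure one shows they are "asymptotically independent enough" — concretely, write $W^{HK}_{t_{n+1}} = (W^{HK}_{t_{n+1}} - \rho_n W^{HK}_{t_n}) + \rho_n W^{HK}_{t_n}$ with $\rho_n$ the regression coefficient, note $\rho_n \to 0$ as $q \to \infty$ since $\mathrm{Cov}(W^{HK}_{t_n}, W^{HK}_{t_{n+1}}) = O(t_n^{2HK} \cdot q^{-\text{something}})$ relative to the variances when the ratio $q$ is large, and apply the second Borel--Cantelli lemma to the (now nearly independent) large-deviation events $\{W^{HK}_{t_{n+1}} - \rho_n W^{HK}_{t_n} > (1-\delta)\psi_{H,K}(t_{n+1})\}$, whose probabilities sum to infinity by the Gaussian tail asymptotic $\mathrm{P}(N(0,\sigma^2) > \lambda) \sim \tfrac{\sigma}{\lambda\sqrt{2\pi}} e^{-\lambda^2/2\sigma^2}$. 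Letting $\delta \downarrow 0$ and $q \uparrow \infty$ yields $\limsup \ge 1$.

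The main obstacle is the lower bound: the dependence between increments of fBm is genuine and does not vanish, so one cannot simply quote the i.i.d. Borel--Cantelli lemma. The standard fix — subtracting off the conditional mean and checking that the residual events along a geometric subsequence are quasi-independent with divergent probability sum — requires a careful estimate of the covariance decay $\mathrm{Cov}(W^{HK}_{s}, W^{HK}_{t})$ for $s \ll t$ and an application of the Kochen--Stone or Lamperti version of the second Borel--Cantelli lemma for weakly dependent events. Since all of this is classical (it is, e.g., the content of the references cited alongside Lemma 4.1 of \cite{leonvilla}), I would in practice simply invoke the LIL for fBm as known, carry out the $B^{H,K} = c X + Y$ decomposition, and spend the bulk of the argument verifying that $Y_t/\psi_{H,K}(t) \to 0$ a.s., which is the only genuinely bifractional-specific input and follows from $\sup_{t \ge e}\mathrm{Var}(Y_t) < \infty$ together with the Gaussian maximal inequality on dyadic blocks.
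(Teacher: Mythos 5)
Your sketch of the fBm law of the iterated logarithm (Borell--TIS upper bound over geometric blocks; quasi-independence plus a second Borel--Cantelli argument for the lower bound) is the standard and correct template, and for what it is worth the paper itself does not prove Lemma \ref{lil} at all --- it only cites Lemma 4.1 of \cite{leonvilla} and the references therein. The genuine gap is in your reduction step. The Lei--Nualart decomposition does not express $B^{H,K}$ as a constant times fBm plus a negligible independent remainder; it states that $C_1 X_{t^{2H}} + B^{H,K}_t$ has the law of $C_2 B^{HK}_t$, with $C_1=\sqrt{2^{-K}K/\Gamma(1-K)}$, $C_2=2^{(1-K)/2}$, and $X_t=\int_0^\infty(1-e^{-\theta t})\theta^{-(1+K)/2}\,\d W_\theta$ smooth and independent of $B^{H,K}$. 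Solving for $B^{H,K}_t=C_2B^{HK}_t-C_1X_{t^{2H}}$, the two terms on the right are no longer independent, and --- more seriously --- the remainder is \emph{not} small at the LIL scale: since $\E[X_s^2]=\tfrac{\Gamma(1-K)}{K}(2-2^K)s^K$, one has $\mathrm{Var}(C_1X_{t^{2H}})=(2^{1-K}-1)\,t^{2HK}$, which is of exactly the same order as $\mathrm{Var}(B^{H,K}_t)=t^{2HK}$. So your claim that $\mathrm{Var}(Y_t)$ is bounded, or $O(t^{2HK-\e})$, is false for every $K<1$, and in fact the same quasi-independence argument you propose for the lower bound shows $\limsup_t C_1|X_{t^{2H}}|/\psi_{H,K}(t)=\sqrt{2^{1-K}-1}>0$ a.s. A quick consistency check confirms the reduction cannot work as written: if the remainder were negligible, the lemma's limsup constant would be $C_2=2^{(1-K)/2}\neq 1$ for $K<1$.

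The repair is to abandon the decomposition and run your own two-sided argument directly on the Gaussian process $B^{H,K}$. Its variance is exactly $t^{2HK}=\psi_{H,K}(t)^2/(2\log\log t)$, and $\E\bigl[\sup_{[0,T]}B^{H,K}\bigr]=O(T^{HK})=o(\psi_{H,K}(T))$ by $HK$-self-similarity, so Borell--TIS over blocks $[q^n,q^{n+1}]$ with $q\downarrow 1$ gives $\limsup\leq 1$ with the sharp constant. For the lower bound, the normalized covariance $R^{H,K}(s,t)/(st)^{HK}=2^{-K}\bigl((1+(s/t)^{2H})^K-(1-s/t)^{2HK}\bigr)(t/s)^{HK}\rightarrow 0$ as $t/s\rightarrow\infty$ (using $HK<1$ and $HK<2H$), which supplies exactly the asymptotic independence along $t_n=q^n$, $q$ large, that your regression/Kochen--Stone argument needs. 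This direct route is what the general Gaussian LIL results behind Lemma 4.1 of \cite{leonvilla} implement.
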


Consider now the process
 $$
 g(t,x):=\int_0^t\int_\R G(t-s,\,x,\,y)W(\d y\,\d s),
 $$
 where we recall that $G(t,\,x,\,y)$ denotes the Gaussian heat kernel.
Clearly the above is the solution to the stochastic heat equation \eqref{line} with zero drift, zero initial condition, and $\sigma=1$. 

It is shown in \cite{LeiNualart} that for a fixed $x\in \R$, the process $(g(t,x), t\geq 0)$ is a bifractional Brownian motion with parameters $H=K=\frac12$ multiplied by a constant.   In fact, the covariance of $g(t,x)$ is given by
 $$
 \E (g(t,x) g(s,x))=\frac{1}{\sqrt{2\pi}}(\sqrt{t+s}-\sqrt{\vert t-s\vert}).
 $$
In particular, the process $(g(t,x), t \geq 0)$ is H\"older continuous for any exponent less than $HK=1/4$. 

 The following estimates on the increments of $g(t,\,x)$ are well known. For instance see Theorem 6.7 in p.28 of \cite{minicourse} and the proof of  Corollary 3.4 in \cite{Walsh}.
 \begin{lemma} \label{holder}
 For all $p \geq 2$ there exist constants $c_p, \tilde{c}_p>0$ such that for all $x,y \in \R$ and $s,t \geq 0$,
 \begin{equation*}
 \sup_{t>0} \E\left[\vert g(t,x)-g(t,y) \vert^p\right] \leq c_p \vert x-y \vert^{p/2}
 \end{equation*} 
 and
 \begin{equation*}
 \sup_{x \in \R} \E\left[\vert g(t,x)-g(s,x) \vert^p\right] \leq \tilde{c}_p \vert s-t\vert^{p/4}.
 \end{equation*}
 \end{lemma}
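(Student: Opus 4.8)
The plan is to exploit that $g$ is Gaussian and reduce both estimates to a variance computation. For fixed $(t,x)$ the quantity $g(t,x)=\int_0^t\int_\R \mathbf{1}_{[0,t]}(s)\,G(t-s,x,y)\,W(\d y\,\d s)$ is a Wiener integral of a deterministic kernel against space-time white noise, so each increment $g(t,x)-g(t,y)$ and $g(t,x)-g(s,x)$ is a centered Gaussian random variable. For any centered Gaussian $Z$ one has $\E|Z|^p=\kappa_p(\E Z^2)^{p/2}$ with $\kappa_p=\E|N|^p<\infty$, $N$ standard normal. Hence it suffices to prove
\[
\sup_{t>0}\E[(g(t,x)-g(t,y))^2]\le C\,|x-y| \quad\text{and}\quad \sup_{x\in\R}\E[(g(t,x)-g(s,x))^2]\le C\,|t-s|^{1/2};
\]
raising to the power $p/2$ then gives the two claims with $c_p=\kappa_p C^{p/2}$ and $\tilde c_p=\kappa_p C^{p/2}$.

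For the temporal increment I would use the covariance formula recorded above, namely $\E(g(t,x)g(s,x))=\frac{1}{\sqrt{2\pi}}(\sqrt{t+s}-\sqrt{|t-s|})$, which is independent of $x$ and therefore yields a bound uniform in $x$ at once. Expanding the increment gives
\[
\E[(g(t,x)-g(s,x))^2]=\frac{1}{\sqrt{2\pi}}\Big(\sqrt{2t}+\sqrt{2s}-2\sqrt{t+s}+2\sqrt{|t-s|}\Big).
\]
Since $2\sqrt{ts}\le t+s$ by the arithmetic–geometric mean inequality, squaring shows $\sqrt2(\sqrt t+\sqrt s)\le 2\sqrt{t+s}$, so the first three terms are nonpositive and $\E[(g(t,x)-g(s,x))^2]\le \frac{2}{\sqrt{2\pi}}\,|t-s|^{1/2}$, which is exactly what is needed (the $\tfrac14$-Hölder exponent emerging after the power $p/2$).

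For the spatial increment there is no fixed-$x$ covariance formula available, so I would compute directly. Walsh's isometry gives
\[
\E[(g(t,x)-g(t,y))^2]=\int_0^t\int_\R\big(G(r,x,z)-G(r,y,z)\big)^2\,\d z\,\d r.
\]
Since $G(r,x,z)$ depends only on $x-z$, its spatial Fourier transform in $z$ equals $e^{-i\xi x}e^{-r\xi^2}$, so the transform of $z\mapsto G(r,x,z)-G(r,y,z)$ has modulus $|e^{-i\xi x}-e^{-i\xi y}|\,e^{-r\xi^2}$; by Plancherel's identity the inner integral is a constant multiple of $\int_\R |e^{-i\xi x}-e^{-i\xi y}|^2 e^{-2r\xi^2}\,\d\xi$, where $|e^{-i\xi x}-e^{-i\xi y}|^2=4\sin^2(\xi(x-y)/2)$. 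Integrating in $r$ first and bounding $\int_0^t e^{-2r\xi^2}\,\d r=\frac{1-e^{-2t\xi^2}}{2\xi^2}\le\frac{1}{2\xi^2}$ removes the $t$-dependence (this is the source of uniformity in $t$), leaving a constant multiple of $\int_\R \sin^2(\xi(x-y)/2)/\xi^2\,\d\xi$. The substitution $\eta=\xi(x-y)$ pulls out a factor $|x-y|$ times the finite constant $\int_\R \sin^2(\eta/2)/\eta^2\,\d\eta$, giving $\E[(g(t,x)-g(t,y))^2]\le C|x-y|$ as required.

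The computations are routine once the reduction to variances is made; the only genuine points to watch are the two uniformity requirements and the scalings producing the precise exponents. The spatial bound is uniform in $t$ precisely because of the elementary estimate $1-e^{-2t\xi^2}\le1$, and the temporal bound is uniform in $x$ because the covariance does not depend on $x$. I regard the temporal estimate as the step requiring the most care: without the explicit covariance one would have to split the increment into the independent contributions over the time slabs $(s,t)$ and $(0,s)$—the first integrating directly to a multiple of $|t-s|^{1/2}$ and the second requiring its own Plancherel-plus-scaling argument applied to $(1-e^{-(t-s)\xi^2})^2/\xi^2$—so having the covariance formula on hand reduces this to the one-line arithmetic–geometric mean estimate above.
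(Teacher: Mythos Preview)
Your proof is correct. The paper does not give its own argument for this lemma but simply cites the standard references (Walsh's lecture notes and Khoshnevisan's minicourse); your approach---reducing to the second moment via Gaussianity, then computing the spatial variance by the It\^o isometry and Plancherel, and the temporal variance directly from the bifractional covariance formula already displayed in the paper---is exactly the standard computation those references contain. Your shortcut for the time increment, reading it off from the explicit covariance and the AM--GM inequality rather than splitting into time slabs, is a clean use of what the paper has just established.
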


 As a consequence of Lemma \ref{holder} and the improvement of the classical Garsia's lemma obtained in Proposition A.1. of \cite{DKN}, we have the following estimate.
 \begin{proposition} \label{supn}
 For all $p \geq 2$, there exists a constant  $A_p>0$ such that 
 for any integer $n \geq 1$,
 $$
  \E\left[\sup_{s,t \in [n,n+2], x,y \in [0,1]}\vert g(t,x)-g(s,y) \vert^p \right] \leq A_p 2^{p/4}.
  $$ 
 \end{proposition}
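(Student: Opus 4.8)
The plan is to merge the two increment estimates of Lemma~\ref{holder} into a single bivariate modulus bound and then feed it into the Garsia-type lemma of Proposition~A.1 of \cite{DKN}. The one genuine point of care is keeping every constant independent of $n$, and this is exactly what the suprema $\sup_{t>0}$ and $\sup_{x\in\R}$ appearing in Lemma~\ref{holder} are for.

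First, by the triangle inequality in $L^q$ together with Lemma~\ref{holder}, for every $q\geq 2$ there is a constant $K_q>0$ with
\begin{equation*}
\E\big[|g(t,x)-g(s,y)|^q\big]\leq K_q\big(|t-s|^{q/4}+|x-y|^{q/2}\big)\qquad\text{for all }s,t\geq 0,\ x,y\in\R.
\end{equation*}
To remove the explicit $n$ and to expose the factor $2^{q/4}$, I would reparametrise: set $h(t,x):=g(n+2t,x)$ for $(t,x)\in[0,1]^2$. Then $n+2t$ runs over $[n,n+2]$, so
\begin{equation*}
\sup_{(t,x),(s,y)\in[0,1]^2}|h(t,x)-h(s,y)|=\sup_{\substack{s,t\in[n,n+2]\\ x,y\in[0,1]}}|g(t,x)-g(s,y)|=:S,
\end{equation*}
and the previous display gives, for all $(t,x),(s,y)\in[0,1]^2$,
\begin{equation*}
\E\big[|h(t,x)-h(s,y)|^q\big]\leq K_q\big(2^{q/4}|t-s|^{q/4}+|x-y|^{q/2}\big)\leq K_q\,2^{q/4}\big(|t-s|^{q/4}+|x-y|^{q/2}\big),
\end{equation*}
with $K_q$ and the domain $[0,1]^2$ now both independent of $n$.

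Next I would apply Proposition~A.1 of \cite{DKN} to $h$ on the fixed square $[0,1]^2$ equipped with the anisotropic metric $d((t,x),(s,y)):=|t-s|^{1/4}+|x-y|^{1/2}$, for which $|t-s|^{q/4}+|x-y|^{q/2}\leq d((t,x),(s,y))^q$. Since balls of radius $r$ in $d$ have Lebesgue measure of order $r^{6}$, the entropy condition in the Garsia lemma is met only for exponents exceeding $6$; I therefore fix $q=q(p):=\max(p,9)$ and apply the lemma with $\Psi(u)=u^q$. Because the conclusion of such a lemma is linear in the constant of the modulus bound, this produces a constant $A_q'>0$ depending only on $q$ such that
\begin{equation*}
\E[S^q]=\E\Big[\sup_{(t,x),(s,y)\in[0,1]^2}|h(t,x)-h(s,y)|^{q}\Big]\leq A_q'\,K_q\,2^{q/4}.
\end{equation*}
Finally, since $p\leq q$, Jensen's inequality gives
\begin{equation*}
\E[S^p]\leq\E[S^q]^{p/q}\leq\big(A_q'\,K_q\,2^{q/4}\big)^{p/q}=A_p\,2^{p/4},\qquad A_p:=\big(A_q'\,K_q\big)^{p/q},
\end{equation*}
which is the assertion.

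The main obstacle---really the only subtle point---is this Garsia step: one cannot invoke Proposition~A.1 of \cite{DKN} directly at a small exponent $p$, because the metric that naturally combines the two H\"older scales (time exponent $1/4$, space exponent $1/2$) has balls of volume $\sim r^6$ and hence demands an exponent $>6$. The way around it is to use that the moment bounds of Lemma~\ref{holder} hold for \emph{every} $q$---a consequence of the Gaussianity of $g$---apply the lemma at a fixed large $q=q(p)$, and then descend back to $p$ via Jensen. Everything else, including the bookkeeping that makes the constant uniform in $n$ and the $2^{p/4}$ scaling explicit, is routine.
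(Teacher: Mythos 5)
Your proof is correct and follows essentially the same route as the paper: both combine the two increment bounds of Lemma~\ref{holder} into a single anisotropic modulus, feed it into Proposition~A.1 of \cite{DKN}, and evaluate at the diameter of the box $[n,n+2]\times[0,1]$ (the paper simply takes $\epsilon=\sqrt{2}\,2^{1/4}$ in the resulting uniform estimate instead of rescaling to the unit square). The high-exponent-then-Jensen reduction you perform by hand is exactly the ``improvement of the classical Garsia lemma'' that Proposition~A.1 of \cite{DKN} already packages, so that step duplicates, rather than supplements, the cited result.
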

 
 \begin{proof}
 The proof follows along the same lines as the proof of Lemma 4.5 in \cite{DKN}.
 Indeed, by Lemma \ref{holder} and Proposition A.1. in \cite{DKN}, we get that for all $p \geq 2$,  there exists a constant  $A_p>0$ such that for any $\epsilon>0$,
 $$
  \E\left[\sup_{(\vert t-s\vert^{1/2}+\vert x-y \vert)^{1/2} \leq \epsilon}\vert g(t,x)-g(s,y) \vert^p \right] \leq A_p \epsilon^p.
 $$
 Then, using this inequality with $\epsilon=\sqrt{2} 2^{1/4}$ implies the desired result.
 \end{proof}
 
We can now use Proposition \ref{supn} to get the following almost sure result. This is an extension to the multiparameter case of  Lemma 4.2 in \cite{leonvilla}.
\begin{proposition} \label{supnas}
Almost surely,
 $$
   \sup_{s,t \in [n,n+2], x,y \in [0,1]}
  \frac{\vert g(t,x)-g(s,y) \vert}{\psi_{\frac12, \frac12}(n)} \longrightarrow 0, \quad \text{ as } n \rightarrow \infty.
  $$ 
 \end{proposition}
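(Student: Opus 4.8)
\textbf{Proof proposal for Proposition \ref{supnas}.}
The plan is to deduce the almost sure convergence from the $L^p$ bound of Proposition \ref{supn} by a Borel--Cantelli argument along the integers. First I would fix $p$ large enough — specifically $p>4$ — and set
\[
S_n:=\sup_{s,t \in [n,n+2],\, x,y \in [0,1]}\vert g(t,x)-g(s,y)\vert.
\]
By Proposition \ref{supn}, $\E[S_n^p]\le A_p 2^{p/4}$ uniformly in $n$. By Markov's inequality, for any $\delta>0$,
\[
\P\!\left(S_n > \delta\, \psi_{\frac12,\frac12}(n)\right)
\le \frac{\E[S_n^p]}{\delta^p\, \psi_{\frac12,\frac12}(n)^p}
\le \frac{A_p 2^{p/4}}{\delta^p\, (n^{1/4}\sqrt{2\log\log n})^{p}}
= \frac{C_{p,\delta}}{n^{p/4}\,(\log\log n)^{p/2}}.
\]
Since $p/4>1$, the series $\sum_n n^{-p/4}(\log\log n)^{-p/2}$ converges, so by Borel--Cantelli, for each fixed $\delta>0$ we have $S_n \le \delta\,\psi_{\frac12,\frac12}(n)$ for all $n$ large enough, almost surely. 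Taking $\delta=1/k$ over $k\in\bN$ and intersecting the corresponding full-probability events gives $S_n/\psi_{\frac12,\frac12}(n)\to 0$ almost surely, which is exactly the claim.

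The only mild subtlety is that Proposition \ref{supn} is stated on blocks $[n,n+2]$ indexed by integers $n$, so the above directly controls only the integer subsequence; but since the stated supremum is already over the full block $[n,n+2]$ (and consecutive blocks overlap), no further interpolation between integers is needed — the blocks $[n,n+2]$ already cover $[1,\infty)$ as $n$ ranges over $\bN$, and that is all that is used later (e.g.\ in verifying Assumption \ref{B} for the relevant stochastic term). I would also remark that $\psi_{\frac12,\frac12}(n)=n^{1/4}\sqrt{2\log\log n}$ is increasing for $n$ large, which is implicitly what lets the block estimate translate into a statement about $g$ on all of $[n,n+2]$ without loss.

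I do not expect any real obstacle here: the content is entirely in Proposition \ref{supn}, and this proposition is a routine quantitative Borel--Cantelli passage. The one thing to be careful about is to choose $p$ strictly greater than $4$ (so that $p/4>1$ and the series converges); any such $p$ works since Proposition \ref{supn} holds for all $p\ge 2$. If one wanted the exponent $\psi_{\frac12,\frac12}$ to appear naturally rather than an arbitrary polynomial rate, one could alternatively note that this is the multiparameter analogue of Lemma 4.2 in \cite{leonvilla} and the law of iterated logarithm (Lemma \ref{lil}) suggests $\psi_{\frac12,\frac12}$ is the correct normalization — but for the qualitative statement ``$\to 0$'' the crude Markov bound above suffices.
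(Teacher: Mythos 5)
Your proposal is correct and is essentially the paper's argument: the paper also fixes $p>4$, uses Proposition \ref{supn} to bound $\sum_n \E[S_n^p]/\psi_{\frac12,\frac12}(n)^p$ by a convergent series, and concludes that the normalized suprema tend to $0$ almost surely (it phrases this as the a.s. finiteness of the random series rather than via Markov plus Borel--Cantelli, but the content is identical).
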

 \begin{proof}
 Proposition \ref{supn} implies that for $p>4$,
 $$
 \E \left[\sum_{n=1}^{\infty} \sup_{s,t \in [n,n+2], x,y \in [0,1]}
  \frac{\vert g(t,x)-g(s,y) \vert^p}{\psi_{\frac12, \frac12}(n)^p}\right] 
  \leq \sum_{n=1}^{\infty} \frac{A_p 2^{p/4}}{\psi_{\frac12, \frac12}(n)^p} < \infty,
 $$
 which gives us the desired result.
 \end{proof}
 
 As a consequence of Proposition \ref{supnas}, we get the following estimate.
 \begin{proposition}\label{sup2}
 Almost surely, there exists a sequence $t_n \rightarrow \infty$ such that 
 $$
 \inf_{h \in [0,1], x \in [0,1]} g(t_n+h,x) \rightarrow \infty \quad\text{as}\quad n\rightarrow \infty.
 $$
 \end{proposition}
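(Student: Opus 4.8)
The plan is to upgrade the law of the iterated logarithm for the one--parameter time process $t\mapsto g(t,0)$ to the two--parameter statement we want, using the uniform oscillation estimate of Proposition \ref{supnas} to spread a single large value of $g$ at a point $(s^{*},0)$ over the whole slab $[s^{*},s^{*}+1]\times[0,1]$, up to a lower--order error.

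Concretely I would argue as follows. For fixed $x$ the process $(g(t,x))_{t\ge 0}$ is, in law, a constant multiple $c_{0}B^{\frac12,\frac12}$ of a bifractional Brownian motion with $c_{0}>0$ (one reads off $c_{0}=\pi^{-1/4}$ from the covariance $\tfrac{1}{\sqrt{2\pi}}(\sqrt{t+s}-\sqrt{|t-s|})$, but only $c_{0}>0$ is needed), so Lemma \ref{lil} gives
\[
\limsup_{t\to\infty}\frac{g(t,0)}{\psi_{\frac12,\frac12}(t)}=c_{0}\qquad\text{almost surely.}
\]
Work on the almost sure event where this holds and where the conclusion of Proposition \ref{supnas} holds. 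On that event pick real times $s_{k}\uparrow\infty$ with $g(s_{k},0)\ge\tfrac{c_{0}}{2}\,\psi_{\frac12,\frac12}(s_{k})$ and set $t_{k}:=\lfloor s_{k}\rfloor$, so $t_{k}\uparrow\infty$ and $s_{k}\in[t_{k},t_{k}+1]$. For any $h,x\in[0,1]$ the points $(t_{k}+h,x)$ and $(s_{k},0)$ both lie in $[t_{k},t_{k}+2]\times[0,1]$, hence, with $\delta_{n}:=\psi_{\frac12,\frac12}(n)^{-1}\sup_{s,t\in[n,n+2],\,y,z\in[0,1]}|g(t,y)-g(s,z)|\to 0$ (Proposition \ref{supnas}),
\begin{align*}
\inf_{h,\,x\in[0,1]}g(t_{k}+h,x)
&\;\ge\;g(s_{k},0)-\delta_{t_{k}}\,\psi_{\frac12,\frac12}(t_{k})\\
&\;\ge\;\frac{c_{0}}{2}\,\psi_{\frac12,\frac12}(s_{k})-\delta_{t_{k}}\,\psi_{\frac12,\frac12}(t_{k})
\;\ge\;\Bigl(\frac{c_{0}}{2}-\delta_{t_{k}}\Bigr)\psi_{\frac12,\frac12}(t_{k}),
\end{align*}
where the last step uses $s_{k}\ge t_{k}$ and that $\psi_{\frac12,\frac12}$ is nondecreasing for large argument. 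Since $\delta_{t_{k}}\to0$ while $\psi_{\frac12,\frac12}(t_{k})\to\infty$, the left--hand side tends to $+\infty$, which is exactly the claim for the sequence $(t_{k})$.

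The only delicate point --- and the step I expect to be the main (and essentially only) obstacle --- is the mismatch between the real times $s_{k}$ produced by the law of the iterated logarithm and the integer--indexed windows $[n,n+2]$ on which Proposition \ref{supnas} controls the oscillation of $g$; passing to $t_{k}=\lfloor s_{k}\rfloor$ and exploiting the monotonicity $\psi_{\frac12,\frac12}(s_{k})\ge\psi_{\frac12,\frac12}(t_{k})$ resolves it. Beyond that, the argument is a direct combination of Lemma \ref{lil} and Proposition \ref{supnas}, with no further estimates required.
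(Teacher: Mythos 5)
Your proposal is correct and follows essentially the same route as the paper's own (rather more terse) proof: anchor at a single space-time point where the law of the iterated logarithm (Lemma \ref{lil}) makes $g$ large of order $\psi_{\frac12,\frac12}$, then use the uniform oscillation bound of Proposition \ref{supnas} over the window $[\lfloor s_k\rfloor,\lfloor s_k\rfloor+2]\times[0,1]$ to show the error is $o(\psi_{\frac12,\frac12})$. Your handling of the real-versus-integer time mismatch via $t_k=\lfloor s_k\rfloor$ and the monotonicity of $\psi_{\frac12,\frac12}$ is exactly the detail the paper leaves implicit.
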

 \begin{proof}
 Fix $x_0 \in [0,1]$. Choose $\omega$ such that both Proposition \ref{supnas} and Lemma \ref{lil} hold. We now write
 \begin{align*}
 \inf_{h \in [0,1], x \in [0,1]} g(t+h,x)
 &=g(t,x_0)+\inf_{h \in [0,1], x \in [0,1]} \left(g(t+h,x)-g(t,x_0)\right) \\
 &\geq g(t,x_0)+\inf_{h \in [0,1], x \in [0,1]} \left(-\vert g(t+h,x)-g(t,x_0)\vert\right) \\
  &\geq  \frac{g(t,x_0)}{\psi_{\frac12, \frac12}(t)} \psi_{\frac12, \frac12}(t)-\sup_{h \in [0,1], x \in [0,1]} \frac{\vert g(t+h,x)-g(t,x_0)\vert}{\psi_{\frac12, \frac12}([t])} \psi_{\frac12, \frac12}([t]).
 \end{align*} 
We use Proposition \ref{supnas} and Lemma \ref{lil} to choose an appropriate sequence $t_n$ and finish the proof.
 \end{proof}
 
\section{Proofs of Theorems \ref{theo-dirichlet} and \ref{line-blowup}}

As mention earlier, the proof of Theorem \ref{theo-dirichlet} follows that of Proposition \ref{leon-villa} but it heavily relies on the fact that the stochastic term in the random field formulation is continuous and that the equation itself is defined on an interval.
\begin{proof}[Proof of Theorem \ref{theo-dirichlet}]
Set 
\begin{equation*}
T:=\sup\{ t>0: \sup_{x\in [0,\,1]}|u(t,\,x)|<\infty\},
\end{equation*}
where $\sup \emptyset:=-\infty$.
Since the solution blows up in finite time with positive probability, we can find a set $\Omega$  satisfying $\P(\Omega)>0$ such that for any $\omega\in \Omega$, we have $T(\omega)<\infty.$  We now fix such an $\omega$ but for the sake of notational convenience, we won't indicate the dependence on $\omega$ in what follows. We recall that we are looking at the mild formulation
\begin{equation*} \begin{split}
u(t,\,x)=\int_0^1p(t,\,x,\,y)u_0(y)\,\d y&+\int_0^t\int_0^1p(t-s,\,x,\,y)b(u(s,\,y))\,\d y\,\d s\\
&+\sigma\int_0^t\int_0^1p(t-s,\,x,\,y)W(\d y\,\d s).
\end{split}
\end{equation*}
The third term in the above display is almost surely continuous. Therefore the following quantity below is finite almost surely
\begin{equation*}
M:=\sup_{x\in [0,\,1]\, t\in (0, T]}\left|\int_0^t\int_0^1p(t-s,\,x,\,y)W(\d y\,\d s)\right|.
\end{equation*}
Moreover by the nonnegativity of $b$ and the initial condition, we have
\begin{equation*}
u(t,\,x)\geq \sigma\int_0^t\int_0^1p(t-s,\,x,\,y)W(\d y\,\d s).
\end{equation*}
This means that  
\begin{equation*}
\inf_{t\in [0,\,T], x\in [0,\,1]}u(t,\,x)\geq -\sigma M.
\end{equation*}
Since $u_0$ is bounded, we have
\begin{equation*}
\left|\int_0^1p(t,\,x,\,y)u_0(y)\,\d y \right|\leq a,
\end{equation*}
for some positive constant $a$. Denote $\mathcal{A}:=\{s\in (0,\,t), y\in (0,\,1); -\sigma M\leq u(s, y)\leq 0\}$ and $\mathcal{B}:=\{s\in (0,\,t), y\in (0,\,1); u(s, y)>0 \}$ and write 
\begin{align*}
 \int_0^t\int_0^1p(t-s,\,x,\,y)b(u(s,\,y))\,\d y\,\d s&=\iint_\mathcal{A}p(t-s,\,x,\,y)b(u(s,\,y))\,\d y\,\d s\\
&+\iint_\mathcal{B}p(t-s,\,x,\,y)b(u(s,\,y))\,\d y\,\d s\\
&:=I_1+I_2.
\end{align*}
Since we are assuming that $b$ is nonnegative and nondecreasing on $(0, \infty)$, this immediately gives us
\begin{equation*}
I_2\leq \int_0^tb(Y_s)\,\d s,
\end{equation*}
where $Y_t:=\sup_{x\in [0,\,1]}u(t,\,x).$
Since $b$ is assumed to be continuous, we have $I_1\leq K$, where $K$ is an almost sure finite quantity.  Putting all these estimates together, we obtain 
\begin{align*}
Y_t\leq a+\sigma M+K+\int_0^tb(Y_s)\,\d s.
\end{align*}
We can now proceed as in the proof of Proposition \ref{leon-villa} to conclude the proof.
\end{proof}
\begin{remark}
We remark that the blow up time $T$ used above is the same as $\tau_\infty$ defined in the introduction. Indeed if $T<\tau_{\infty}$, then $T<t<\tau_{\infty}$ for some $t$.
By the  definition of  $T$, we should have $\sup_{x \in [0,1]} \vert u(t,x) \vert=\infty$, but then this would imply (by the  definition of $\tau_N$) that $t>\tau_N$ for any $N$. Thus, $t> \tau_{\infty}$, which contradicts the  assumption. Therefore, we have that  $\tau_{\infty} \leq T$. If $\tau_{\infty}<T$, then $\tau_{\infty}<t<T$ for some $t$.  As $t>\tau_{\infty}$, we get $\sup_{x \in [0,1]} \vert u(t,x) \vert=\infty$ which contradicts the fact  that  $t<T$. Therefore, we conclude that $T=\tau_{\infty}$.
\end{remark}

\begin{proof}[Proof of Theorem \ref{line-blowup}]
Let $\{t_n\}$ be a sequence of positive numbers which we are going to choose later. From the mild formulation of the solution and the nonnegativity of the function $b$, we obtain 
\begin{equation*}\begin{split}
u(t+t_n,\,x)&=\int_\R G(t+t_n,\,x,\,y)u_0(y)\,\d y+\int_0^{t+t_n}\int_\R G(t+t_n-s,\,x,\,y)b(u(s,\,y))\,\d y\,\d s\\ 
&\qquad  +\sigma\int_0^{t+t_n}\int_\R G(t+t_n-s,\,x,\,y)W(\d y\,\d s)\\
&\geq \int_\R G(t+t_n,\,x,\,y)u_0(y)\,\d y+\int_0^{t}\int_\R G(t-s,\,x,\,y)b(u(s+t_n,\,y))\,\d y\,\d s\\ 
&\qquad +\sigma\int_0^{t+t_n}\int_\R G(t+t_n-s,\,x,\,y)W(\d y\,\d s).
\end{split}
\end{equation*}
We will take $0\leq t\leq 1$ and $x \in (0,1)$. Recall that 
\begin{equation*}
g(t+t_n,\,x):=\int_0^{t+t_n}\int_\R G(t+t_n-s,\,x,\,y)W(\d y\,\d s).
\end{equation*}
Hence by Proposition \ref{sup2}, we can find a sequence $t_n \rightarrow \infty$ so that the above quantity is positive for $0\leq t\leq 1$ and $x \in (0,1)$. Therefore $u(t+t_n,\,x)$ is also positive for any $x\in (0,\,1)$ and any  $0\leq t\leq 1$. We now use the fact that $b$ is nondecreasing on $(0,\,\infty)$ to bound the second term as follows. For fixed $x\in (0, \,1)$,
\begin{align*}
\int_0^{t}\int_\R G(t-s,\,x,\,y)&b(u(s+t_n,\,y))\,\d y\,\d s\\
&\geq \int_0^t b\left(\inf_{y\in (0,\,1)}u(s+t_n,\,y)\right)\int_{(0,\,1)}G(t-s,\,x,\,y)\,\d y\,\d s\\
&\geq \int_0^t b\left(\inf_{y\in (0,\,1)}u(s+t_n,\,y)\right)\,\d s,
\end{align*}
where we have used that fact that $G(t,\,x,\,y)\geq \frac{c}{t^{1/2}}$ whenever $|x-y|\leq t^{1/2}$.  We now set $Y_t:=\inf_{y\in (0,\,1)}u(t+t_n,\,y)$ and combine the above estimates to obtain 
\begin{align*}
Y_t\geq \inf_{0\leq h\leq 1, x\in (0,\,1)}\Big\{ \int_\R G(h+t_n,\,x,\,y)u_0(y)\,\d y+\sigma g(h+t_n,\,x)\Big\}+\int_0^tb(Y_s)\,\d s.
\end{align*}
We now choose $\omega$ as in Proposition \ref{sup2}, and we can therefore find a sequence $t_n\rightarrow \infty $ such that $\inf_{0\leq h\leq 1, x\in (0,\,1)}g(h+t_n,\,x)$ goes to infinity. By the proof of Proposition \ref{leon-villa}, we have the required result.
\end{proof}
\section{Extension to fractional Laplacian and colored noise}
The aim of this section is to prove Theorems  \ref{Bonder-general} and \ref{line-genral-blowup}. For this, we first define rigorously the Gaussian noise $F$ and extend the results of Section 2.2 to the equation in $\R^d$.

\subsection{The Gaussian noise $F$}

Let $\mathcal{D}(\R_+\times \R^d)$ be the space of real-valued infinitely differentiable functions with compact support.
Following \cite{Dalang}, on a complete probability space $(\Omega, \mathcal{F}, \P)$, we consider a centered Gaussian family of random variables $\{F(\varphi), \varphi \in \mathcal{D}(\R_+ \times \R^d)\}$ with covariance
$$
\E\left[ F(\varphi) F(\psi)\right]=\int_{\R_+ \times \R^{2d}} \varphi(t,x) \psi(t,y) f(x-y) \d x \d y \d t,
$$
where $f$ is as in Remark \ref{r1}.
Let $\mathcal{H}$ be the completion of $\mathcal{D}(\R_+\times \R^d)$ with respect to the inner product
\begin{equation} \label{p}\begin{split}
\langle \varphi, \psi \rangle_{\mathcal{H}}&=\int_{\R_+ \times \R^{2d}} \varphi(t,x) \psi(t,y) f(x-y) \d x \d y \d t \\
&=\int_{\R_+ \times \R^{d}} \mathcal{F}\varphi(t,\cdot) (\xi)\overline{\mathcal{F}\psi(t,\cdot) (\xi)}\mu(\d\xi) \d t,
\end{split}
\end{equation}
where the last equality follows by Parseval's identity.
The mapping $\varphi \mapsto F(\varphi)$ defined in $\mathcal{D}(\R_+ \times \R^d)$ extends to a linear isometry 
between $\mathcal{H}$ and the Gaussian space spanned by $F$. We will denote the isometry by 
$$
F(\varphi)=\int_{\R_+ \times \R^d} \varphi(t,x) F(\d t\,\d x), \qquad \varphi \in \mathcal{H}.
$$
Notice that if $\varphi, \psi \in \mathcal{H}$, then $\E\left[ F(\varphi) F(\psi)\right]=\langle \varphi, \psi \rangle_{\mathcal{H}}$. Moreover,
$\mathcal{H}$ contains the space of measurable functions $\phi$ on $\R_+ \times \R^d$ such that
$$
\int_{\R_+ \times \R^{2d}} \vert \phi(t,x) \phi(t,y)\vert  f(x-y) \d x \d y \d t < \infty.
$$

\subsection{Estimates for the whole space}

Consider the solution to the stochastic heat equation \eqref{line-general} with zero drift, zero initial condition,
and $\sigma=1$, that is,
 $$
 g_{\alpha,\beta}(t,x):=\int_0^t\int_{\R^d} G_{\alpha}(t-s,\,x,\,y)F(\d y\,\d s),
 $$ 
 where recall that  $G_{\alpha}(t,\,x,\,y)$ is the fractional heat kernel in $\R^d$ and $F$ has a spatial correlation given by the Riesz kernel.
 
 Let us compute the covariance of the Gaussian process $(g_{\alpha, \beta}(t,x), t \geq 0)$ for $x \in \R^d$ fixed.
 By \eqref{p}, as 
 \begin{equation} \label{fourier}
 \mathcal{F} G_{\alpha}(t, \,x,\,\cdot)(\xi)=e^{i\langle x, \xi \rangle -\frac12 t \vert \xi \vert^{\alpha}},\quad \xi \in \R^d,
 \end{equation}
 we get that, for $s \leq t$,
 \begin{equation*} \begin{split}
 &\E (g_{\alpha, \beta}(t,x) g_{\alpha, \beta}(s,x))\\
 &=
\int_0^s  \int_{\R^d \times \R^d}  G_{\alpha}(t-u,\,x,\,y)
G_{\alpha}(s-u,\,x,\,z) \vert z-y \vert^{-\beta} \d y \d z \d u \\
&=c_{d,\beta} \int_0^s \int_{\R^d }  \vert \xi \vert^{-(d-\beta)}
e^{-\frac12 (t-u) \vert \xi \vert^{\alpha}}
e^{-\frac12 (s-u) \vert \xi \vert^{\alpha}} \d \xi \d u\\
&=c_{d,\beta}\int_{\R^d }  \vert \xi \vert^{-(d-\beta)-\alpha}
e^{-\frac12 (t+s) \vert \xi \vert^{\alpha}}
\left(e^{s \vert \xi \vert^{\alpha}}-1\right) \d\xi\\
&=c_{d,\beta}\int_{\R^d }  \vert \xi \vert^{-(d-\beta)-\alpha}
\left(e^{-\frac12 (t-s) \vert \xi \vert^{\alpha}}-e^{-\frac12 (t+s) \vert \xi \vert^{\alpha}}\right) \d\xi\\
&=c_{d,\beta}\int_{\R^d }  \vert \xi \vert^{-(d-\beta)-\alpha}
\left(\int_{-\frac12 (t+s) \vert \xi \vert^{\alpha}}^0 e^z \d z-\int_{-\frac12 (t-s) \vert \xi \vert^{\alpha}}^0 e^z \d z\right) \d\xi \\
&=c_{d,\beta,\alpha}
\left((t+s)^{1-\frac{\beta}{\alpha}}-(t-s)^{1-\frac{\beta}{\alpha}}\right),
 \end{split}
\end{equation*}
where $c_{d,\beta,\alpha}=c_{d,\beta} \int_{\R^d }  \vert \xi \vert^{-(d-\beta)-\alpha}(1-e^{-\frac{1}{2} \vert \xi \vert^{\alpha}} )\d\xi$.

Therefore, for $x \in \R^d$ fixed, the process $(g(t,x), t \geq 0)$ is a bifractional 
 Brownian motion with parameters $H=\frac{\alpha-\beta}{2}$ and $K=\frac{1}{\alpha}$, multiplied by a constant. 
 In particular, it is H\"older continuous for any exponent less than $HK=\frac{\alpha-\beta}{2 \alpha}$. 
 
The next proposition is the extension of Lemma \ref{holder} and Propositions \ref{supn}, \ref{supnas} and \ref{sup2} to the process $g_{\alpha,\beta}$.
 \begin{proposition} \label{holderbis}
 \begin{itemize}
 \item [\textnormal{(a)}] For all $p \geq 2$ there exists constants $c_p, \tilde{c}_p>0$ such that for all $x,y \in \R^d$ and $s,t \geq 0$,
 \begin{equation*} 
 \sup_{t>0} \E\left[\vert g_{\alpha,\beta}(t,x)-g_{\alpha,\beta}(t,y) \vert^p\right] \leq c_p \vert x-y \vert^{\frac{(\alpha-\beta)p}{2}}
 \end{equation*} 
 and
 \begin{equation*}
 \sup_{x \in \R^d} \E\left[\vert g_{\alpha,\beta}(t,x)-g_{\alpha,\beta}(s,x) \vert^p\right] \leq \tilde{c}_p \vert s-t\vert^{\frac{(\alpha-\beta)p}{2\alpha}}.
 \end{equation*}

\item [\textnormal{(b)}] For all $p \geq 2$, there exists a constant  $A_p>0$ such that 
 for any integer $n \geq 1$,
 $$
  \E\left[\sup_{s,t \in [n,n+2], x,y \in B_1(0)}\vert g_{\alpha,\beta}(t,x)-g_{\alpha,\beta}(s,y) \vert^p \right] \leq A_p 2^{\frac{(\alpha-\beta)p}{2 \alpha}}.
  $$ 
\item [\textnormal{(c)}] Almost surely,
 $$
   \sup_{s,t \in [n,n+2], x,y \in B_1(0)}
  \frac{\vert g_{\alpha,\beta}(t,x)-g_{\alpha,\beta}(s,y) \vert}{\psi_{\frac{\alpha-\beta}{2}, \frac{1}{\alpha}}(n)} \longrightarrow 0, \quad \text{ as } n \rightarrow \infty.
  $$ 
 
 \item [\textnormal{(d)}] Almost surely, there exists a sequence $t_n \rightarrow \infty$ such that 
 $$
 \inf_{h \in [0,1], x \in B_1(0)} g_{\alpha,\beta}(t_n+h,x) \rightarrow \infty.
 $$

\end{itemize} 
 \end{proposition}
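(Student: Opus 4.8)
The plan is to prove the four parts of Proposition~\ref{holderbis} as direct analogues of Lemma~\ref{holder} and Propositions~\ref{supn}, \ref{supnas}, \ref{sup2}, simply tracking the new Hurst-type parameters $H=\frac{\alpha-\beta}{2}$ and $K=\frac1\alpha$ that emerge from the covariance computation carried out just above the statement. The only genuinely new work is part~(a); parts~(b)--(d) then follow from (a) by exactly the arguments already given in Section~2.2, with $\psi_{\frac12,\frac12}$ replaced by $\psi_{H,K}=\psi_{\frac{\alpha-\beta}{2},\frac1\alpha}$.

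For part~(a), I would argue as follows. Since $g_{\alpha,\beta}(t,\cdot)$ is Gaussian, it suffices by hypercontractivity (equivalence of $L^p$ norms of Gaussians) to bound the second moments and then raise to the power $p/2$; this is why the exponents in the two displays are $\frac{(\alpha-\beta)p}{2}$ and $\frac{(\alpha-\beta)p}{2\alpha}$. For the temporal increment, fix $x$ and use that $(g_{\alpha,\beta}(t,x))_{t\ge0}$ is, up to the constant $c_{d,\beta,\alpha}$, a bifractional Brownian motion with parameters $(H,K)$; a standard computation with its covariance $R^{H,K}$ gives $\E[(B^{H,K}_t-B^{H,K}_s)^2]\le c\,|t-s|^{2HK}$, and here $2HK=\frac{\alpha-\beta}{\alpha}$, which is exactly the claimed exponent for $p=2$. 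For the spatial increment, I would use the Fourier/Parseval formula~\eqref{p} together with~\eqref{fourier}: writing
\begin{equation*}
\E\!\left[|g_{\alpha,\beta}(t,x)-g_{\alpha,\beta}(t,y)|^2\right]
= c_{d,\beta}\int_0^t\!\!\int_{\R^d}|\xi|^{-(d-\beta)}\,\bigl|e^{i\langle x,\xi\rangle}-e^{i\langle y,\xi\rangle}\bigr|^2 e^{-u|\xi|^\alpha}\,\d\xi\,\d u,
\end{equation*}
bounding $|e^{i\langle x,\xi\rangle}-e^{i\langle y,\xi\rangle}|^2 = 2(1-\cos\langle x-y,\xi\rangle)\le \min(2,|x-y|^2|\xi|^2)$, integrating out $u$ to get a factor $|\xi|^{-\alpha}(1-e^{-t|\xi|^\alpha})\le |\xi|^{-\alpha}$, and then rescaling $\xi = |x-y|^{-1}\eta$ to extract the power $|x-y|^{\alpha-\beta}$ (the remaining integral $\int_{\R^d}|\eta|^{-(d-\beta)-\alpha}\min(2,|\eta|^2)\,\d\eta$ converges precisely when $\beta<\alpha<\beta+2$, which holds since $\alpha\le2$ and $\beta<\alpha$). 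Raising both bounds to the power $p/2$ via Gaussianity gives (a).

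Given (a), part~(b) follows verbatim from the proof of Proposition~\ref{supn}: apply the Garsia-type inequality of Proposition~A.1 in \cite{DKN} with the metric $\rho((t,x),(s,y))=\bigl(|t-s|^{1/\alpha}+|x-y|\bigr)^{(\alpha-\beta)/2}$ — for which (a) provides the required moment control on increments — over the index set $[n,n+2]\times B_1(0)$, whose $\rho$-diameter is a fixed constant, yielding the bound $A_p 2^{(\alpha-\beta)p/(2\alpha)}$. Part~(c) follows from (b) by the Borel--Cantelli / summable-series argument of Proposition~\ref{supnas}, since $\sum_n \psi_{H,K}(n)^{-p}<\infty$ for $p$ large (here $\psi_{H,K}(n)^p \sim n^{HKp}(\log\log n)^{p/2}$ with $HK>0$). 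Part~(d) follows from (c) exactly as Proposition~\ref{sup2} follows from Proposition~\ref{supnas}: fix $x_0\in B_1(0)$, split $g_{\alpha,\beta}(t+h,x) = g_{\alpha,\beta}(t,x_0) + \bigl(g_{\alpha,\beta}(t+h,x)-g_{\alpha,\beta}(t,x_0)\bigr)$, bound the increment below using (c), and use the law of the iterated logarithm (Lemma~\ref{lil}) for the bifractional Brownian motion $t\mapsto g_{\alpha,\beta}(t,x_0)$ to pick a sequence $t_n\to\infty$ along which $g_{\alpha,\beta}(t_n,x_0)/\psi_{H,K}(t_n)\to 1$ while the fluctuation term is $o(\psi_{H,K}(t_n))$.

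The main obstacle is the spatial-increment estimate in part~(a): one must verify that the scaling argument actually produces the exponent $\frac{\alpha-\beta}{2}$ and that the resulting $\xi$-integral converges, which requires keeping careful track of the interplay $\beta<\alpha\le2$. Everything else is bookkeeping — substituting the new parameters into arguments already spelled out in Section~2.2 — and I would present parts (b)--(d) briefly, pointing to those proofs rather than repeating them.
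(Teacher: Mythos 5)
Your proposal is correct and follows essentially the same route as the paper: the paper disposes of (a) by citing Sanz-Sol\'e and Sarr\`a together with the Fourier formula \eqref{fourier} (your explicit Parseval-plus-rescaling computation is just that argument written out, modulo the harmless constant $\min(2,\cdot)$ versus $\min(4,\cdot)$), and then obtains (b)--(d) exactly as you do, via the Garsia-type inequality of \cite{DKN} and the arguments of Propositions \ref{supnas} and \ref{sup2} with the parameters $H=\frac{\alpha-\beta}{2}$, $K=\frac{1}{\alpha}$. No gaps.
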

 
 \begin{proof}
 The proof of (a) follows from \cite{Sanz-Sarra} and (\ref{fourier}). Moreover, (a) implies that Lemma 4.5 in \cite{DKN} also holds for our process $g_{\alpha,\beta}(t,x)$, which gives (b). Finally, (c) and (d) follow as in the proof of Propositions \ref{supnas} and \ref{sup2}.
 \end{proof}
 
 \subsection{Proof of Theorems \ref{Bonder-general} and \ref{line-genral-blowup}}
The proof of the first part of Theorem \ref{Bonder-general} follows as in the proof of the main result in \cite{BonderG}, but a different constant arises in Feller's test. The proof of the second part
follows along the  same lines as the proof of Theorem \ref{theo-dirichlet} and is therefore omitted. 
\begin{proof}[Proof of the first part of Theorem \ref{Bonder-general}]
As in  \cite{BonderG}, we set
$$
Y_t=\int_{B_1(0)} u(t,x) c \phi_1(x) \d x
$$
where $u(t,x)$ is the local solution to (\ref{mild:coloureda}), $\phi_1$ is defined in (\ref{phi1}) and $c^{-1}=\int_{B_1(0)} \phi_1(x) dx$. 
Recall that $\phi_1(x)>0$ for all $x \in B_1(0)$, see for e.g. \cite[Theorem 4.2]{CS97}.
Then we obtain that $Y_t \geq X_t$ a.s., where $X_t$ is the solution to  the stochastic differential equation
$$
dX_t=(-\lambda_1 X_t+b(X_t)) \d t+ \d Z_t, \qquad X_0=Y_0
$$
and
$$
Z_t:=\int_0^t \int_{B_1(0)} c \phi_1(y) F(\d y\,\d s).
$$ 
Finally, we can use Feller's test for explosion as in \cite{BonderG} to show that
$X_t$ explodes in finite time with probability one. In fact, it suffices to
consider as scale function
$$
p(x)=\int_0^x \exp \left( -\frac{2}{\kappa}\int_0^s (-\lambda_1 \xi+b(\xi))  d\xi \right)
ds,
$$
where $\kappa:=c^2\int_{B_1(0) \times B_1(0)} \phi_1(y )\phi_1(z)\vert y-z\vert^{-\beta} \d y \d z$,
as $Z_t=\sqrt{\kappa} B_t$, where  $B_t$ is a Brownian motion.
\end{proof}

The proof of Theorem \ref{line-genral-blowup} is similar to that of Theorem \ref{line-blowup}. We will indicate the differences only. 

\begin{proof}[Proof of Theorem \ref{line-genral-blowup}]
The proof follows that of Theorem \ref{line-blowup}. We use the last part of Proposition \ref{holderbis} together with the following inequality 
\begin{align}
G_\alpha(t,\,x,\,y)\geq \frac{\text{c}}{t^{d/\alpha}}\quad\text{whenever}\quad |x-y|\leq t^{1/\alpha}
\end{align}
to prove the required result. See for instance \cite{kolo} for justifications of the above inequality.  We leave it to the reader to fill in the details.
\end{proof}

\section{Extension to multiplicative noise}

In this section we discuss the extension of the previous results when the constant $\sigma$ is replaced
by a locally Lipschitz function $\sigma:\R\rightarrow \R$. For equations on bounded domains, we believe that the Osgood condition is necessary and sufficient for finite time blow-up under the condition $\frac{1}{K}\leq \sigma(x)\leq K$ for all $x \in \R$, for some constant $K>0$. We leave this for future work but the following extension is straightforward.  We start  with equations  (\ref{dirichlet}) and (\ref{dirichlet-general}) on $[0,1]$ and $B_1(0)$, respectively. In this case, the proof of Theorem \ref{theo-dirichlet} and the second half of Theorem \ref{Bonder-general}  extend easily if $\sigma$ is bounded below and above by positive constants. This yields the  following result.
  \begin{theorem} \label{last}
Consider equations  \eqref{dirichlet} or \eqref{dirichlet-general} with $\sigma$ replaced
by $\sigma(u(t,x))$, where $\sigma:\R\rightarrow \R$ is a locally Lipschitz function satisfying $\frac{1}{K}\leq \sigma(x)\leq K$ for all $x \in \R$, for some constant $K>0$. Suppose that Assumption \ref{A} holds. If the corresponding solution blows up in finite time with positive probability, then $b$ satisfies the Osgood condition \eqref{osgood}.
 \end{theorem}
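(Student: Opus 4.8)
The plan is to follow the proof of Theorem~\ref{theo-dirichlet} essentially verbatim, checking that the only two properties of the stochastic term that were used there survive the replacement of $\sigma$ by $\sigma(u(t,x))$. Recall the two ingredients: (i) the almost sure continuity (hence local boundedness) of the stochastic integral term in the mild formulation, which produced the finite random constant $M$; and (ii) the comparison $u(t,x)\ge \sigma\int_0^t\int_0^1 p(t-s,x,y)\,W(\d y\,\d s)$, which used only that $b$ and $u_0$ are nonnegative and that $\sigma$ is a \emph{positive} constant. In the multiplicative setting the stochastic term becomes
$$
N(t,x):=\int_0^t\int_0^1 p(t-s,x,y)\,\sigma(u(s,y))\,W(\d y\,\d s),
$$
so neither ingredient is immediate and both must be re-established.

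First I would fix $\omega$ in the event of positive probability on which $T(\omega)<\infty$, where $T$ is the blow-up time as in the proof of Theorem~\ref{theo-dirichlet}. On $[0,T)$ one has $\sup_{x}|u(t,x)|<\infty$ for each $t$, but I need a uniform-in-time bound to control $\sigma(u(s,y))$ inside the stochastic integral; this is where I would work with the truncated solutions $u_N$ and the stopping times $\tau_N$ from the introduction rather than with $u$ directly. On $[0,\tau_N)$ the integrand $\sigma(u_N(s,y))$ is bounded by $\sup_{|x|\le N}|\sigma(x)|\le K$, so $N(t\wedge\tau_N,x)$ is a genuine stochastic integral with bounded integrand; by the standard $L^p$/Garsia estimates for the stochastic heat equation (as cited in Lemma~\ref{holder} and Proposition~\ref{supn}, which only need an $L^\infty$ bound on the integrand) it has an almost surely continuous modification on $[0,\tau_N]\times[0,1]$. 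Taking $N\to\infty$ and using $\tau_\infty=T$ (the Remark after Theorem~\ref{theo-dirichlet}), one gets that $N(\cdot,\cdot)$ is almost surely continuous on $[0,T)\times[0,1]$; but continuity alone does not bound it up to $T$ if $T$ is an accumulation point. The clean way around this: on the event $\{T<\infty\}$ the solution blows up \emph{upwards}, so for the lower bound it suffices to bound $N$ from below, and for the application of Proposition~\ref{leon-villa}-type comparison one only needs $\inf_{t\le t_0,x}N(t,x)>-\infty$ for each $t_0<T$, which continuity gives; one then runs the argument on each $[0,t_0]$ and lets $t_0\uparrow T$.

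For ingredient (ii), the lower bound $u\ge N$ fails because $\sigma(u(s,y))$ is not a constant, but using $\sigma\ge 1/K>0$ I can still compare: dropping the nonnegative drift and initial-condition terms, $u(t,x)\ge N(t,x)$, and on $[0,t_0]$ this is bounded below by $-M_{t_0}$ where $M_{t_0}:=\sup_{x\in[0,1],\,t\le t_0}|N(t,x)|<\infty$ by continuity. The upper-bound computation then goes through unchanged: splitting $\int_0^t\int_0^1 p\,b(u)$ over $\mathcal A=\{-M_{t_0}\le u\le 0\}$ and $\mathcal B=\{u>0\}$, the $\mathcal A$-piece is bounded by a finite constant $K$ (continuity of $b$ on the compact interval $[-M_{t_0},0]$) and the $\mathcal B$-piece is $\le\int_0^t b(Y_s)\,\d s$ with $Y_t=\sup_x u(t,x)$, since $b$ is nondecreasing on $(0,\infty)$. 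Thus $Y_t\le a+M_{t_0}+K+\int_0^t b(Y_s)\,\d s$ on $[0,t_0]$, and if $Y$ blew up by time $t_0$ then so must the comparison ODE $Y'=b(Y)$, forcing the Osgood integral to be finite; since we may take $t_0$ as close to $T<\infty$ as we like and $Y$ does blow up at $T$, we conclude \eqref{osgood} holds. The analogous adaptation on $B_1(0)$ with the fractional heat kernel $p_\alpha$ and the colored noise $F$ is identical, using Proposition~\ref{holderbis}(a) in place of Lemma~\ref{holder} to get continuity of the (now bounded-integrand) stochastic term. The main obstacle I anticipate is purely the bookkeeping around the blow-up time $T$: making sure the random constants $M_{t_0}$ and the continuity of the stochastic term are obtained honestly via the $\tau_N$-localisation before any blow-up occurs, rather than assuming a priori global control of $\sigma(u)$; once that is set up, everything else is a routine transcription of the additive-noise proof.
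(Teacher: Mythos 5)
Your overall strategy is exactly the one the paper intends (the paper itself only remarks that the proof of Theorem \ref{theo-dirichlet} ``extends easily'' once $\sigma$ is bounded above and below by positive constants), and you correctly isolate the two ingredients that must be re-verified. However, your treatment of the first ingredient contains a genuine gap: the workaround via $t_0\uparrow T$ does not close the argument. For $t_0<T$ the solution has not yet blown up on $[0,t_0]$, so the inequality $Y_t\le a+M_{t_0}+K_{t_0}+\int_0^t b(Y_s)\,\d s$ on $[0,t_0]$ yields no information about \eqref{osgood}: the comparison with the ODE only bites when the dominated process actually explodes. To run the argument of Proposition \ref{leon-villa} you need a single finite random constant valid on all of $[0,T)$, i.e.\ you need $\sup_{t<T,\,x\in[0,1]}|N(t,x)|<\infty$ a.s.\ on $\{T<\infty\}$ --- which is precisely the point you flag (``continuity alone does not bound it up to $T$'') and then never establish. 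If $M_{t_0}$ were allowed to diverge as $t_0\uparrow T$, your argument would conclude nothing. (Note also that you need the sup of $|N|$, not merely a lower bound on $N$: the upper bound on $N$ enters the estimate for $Y_t$, and the lower bound enters the control of the set $\mathcal A$ and hence of $I_1$.)

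The fix is easy and is where the hypothesis $\frac1K\le\sigma\le K$ does its real work: since $\sigma$ is \emph{globally} bounded, the predictable integrand $v(s,y):=\sigma(u(s,y))\mathbf{1}_{\{s<\tau_\infty\}}+\mathbf{1}_{\{s\ge\tau_\infty\}}$ is bounded by $K$ on all of $\R_+\times[0,1]$, so the stochastic convolution $\tilde N(t,x)=\int_0^t\int_0^1 p(t-s,x,y)\,v(s,y)\,W(\d y\,\d s)$ satisfies the moment bounds of Lemma \ref{holder} (with constants multiplied by $K^p$) uniformly, hence admits an a.s.\ continuous modification on the whole of $\R_+\times[0,1]$; by the local property of the stochastic integral it coincides with $N(t,x)$ for $t<\tau_\infty=T$, and continuity on the compact set $[0,T]\times[0,1]$ then gives the required finite $M$ in one stroke, with no limiting procedure in $t_0$. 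With that constant in hand, the remainder of your transcription --- the lower bound $u\ge N\ge-M$, the upward blow-up, the split over $\mathcal A$ and $\mathcal B$, and the comparison as $t\uparrow T$ where $Y$ genuinely explodes --- is correct, as is the identical adaptation on $B_1(0)$ with $p_\alpha$ and the colored noise via Proposition \ref{holderbis}(a).
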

 However, the proof of the  first half of Theorem \ref{Bonder-general} which follows  by  Bonder and Groisman's method does not directly extend to  the multiplicative noise case. We can apply the method used in the proof of Theorem \ref{line-blowup} but we require a new idea since the stochastic term is no longer Gaussian.
 
 Consider now equation (\ref{line}) on the real line and assume  that 
 $\sigma$ is replaced by a locally Lipschitz function $\sigma:\R\rightarrow \R$ bounded away from zero and infinity. Then, the statement of Theorem \ref{line-blowup} holds true provided that we prove Proposition \ref{sup2} for
$$
 g(t,x):=\int_0^t\int_\R G(t-s,\,x,\,y) \sigma(u(s,y)) W(\d y\,\d s).
 $$
 We believe that this is true but the proof is out of the scope of this paper and is left  for further work. The same discussion applies for the equation on the whole space (\ref{line-general}) and Theorem \ref{line-genral-blowup}.

\bibliography{Foon-Nual}

\end{document}